\documentclass[12pt]{article}

\usepackage{authblk}
\usepackage{cite}
\usepackage{amsmath}
\usepackage{amsthm,amsfonts,amssymb}           
\usepackage{bbm}
\usepackage[left=2.5cm,
            right=2.5cm
	       ]{geometry}
\usepackage[pdfstartview=FitH,
            CJKbookmarks=true,
            bookmarksnumbered=true,
            bookmarksopen=true,
            colorlinks,
            linkcolor=blue,
            anchorcolor=blue,
            citecolor=blue,
            urlcolor=blue
            ]{hyperref}
\numberwithin{equation}{section}

\hyphenpenalty=5000
\tolerance=1000          


\newtheorem{thm}{Theorem~}[section]
\newtheorem{cor}[thm]{Corollary~}
\newtheorem{lem}[thm]{Lemma~}

\theoremstyle{remark}


\def\a{\alpha}
\def\bd{\operatorname{bd}}
\newcommand{\me}{e}                                     
\newcommand{\R}{\mathbb{R}}
\newcommand{\cb}{\mathcal{K}_n^n}                       
\newcommand{\lcb}{\mathcal{K}^n}                        
\newcommand{\lpoly}{\mathcal{P}^n}          		    
\newcommand{\GL}{\mathrm{GL}(n)}                        
\newcommand{\intr}{\mathrm{int}\,}

\newcommand{\rmnum}[1]{\romannumeral #1}
\newcommand{\Rmnum}[1]{\expandafter\@slowromancap\romannumeral #1@}
\newcommand{\norm}[1]{\left\Vert{#1}\right\Vert}        
\newcommand{\abs}[1]{\left\vert{#1}\right\vert}         
\newcommand{\xabs}[1]{\vert{#1}\vert}                   
\newcommand{\set}[1]{\left\{{#1}\right\}}               
\newcommand{\xset}[1]{\{#1\}}                           
\newcommand{\inp}[2]{#1\cdot#2}                         
\newcommand{\conr}{C(\R^n)}                             
\newcommand{\la}{L^1_c(\R^n)}                           
\newcommand{\kik}[1]{\mathbbm{1}_{#1}}                  
\newcommand{\upar}{\mathrm{Par}(n)}                     
\newcommand{\lt}{\mathcal{L}}                           
\newcommand{\fijk}{f_{i_{j_k}}}


\makeatletter
\newcommand{\subjclass}[2][1991]{%
  \let\@oldtitle\@title%
  \gdef\@title{\@oldtitle\footnotetext{#1 \emph{Mathematics subject classification.} #2}}%
}
\newcommand{\keywords}[1]{%
  \let\@@oldtitle\@title%
  \gdef\@title{\@@oldtitle\footnotetext{\emph{Key words and phrases.} #1.}}%
}
\makeatother

\title{\bf{Laplace transforms and valuations}}
\author[1]{Jin Li}
\author[2]{Dan Ma}
\affil[1]{Department of Mathematics, Shanghai University, Shanghai 200444, China\\\href{mailto: Jin Li<lijin2955@gmail.com>}{lijin2955@gmail.com}}
\affil[2]{Institut f\"{u}r Diskrete Mathematik und Geometrie, Technische Universit\"{a}t Wien, Wiedner Hauptstra\ss e 8--10/104, 1040 Wien, Austria\\\href{mailto: Dan Ma<madan516@gmail.com>}{madan516@gmail.com}}
\date{}
\subjclass[2010]{44A10, 52A20, 52B45}
\keywords{Laplace transform, valuation, $\GL$ covariance, logarithmic translation covariance}


%

\begin{document}

\maketitle

\begin{abstract}
It is proved that the classical Laplace transform is a continuous valuation which is positively $\GL$ covariant
and logarithmic translation covariant. Conversely, these properties turn out to be sufficient to characterize this transform.
\end{abstract}


\section{Introduction}

Let $f:[0,\infty)\to\R$ be a measurable function. The \emph{Laplace transform} of $f$ is given by
\[\lt f(s)=\int_0^\infty e^{-st}f(t)dt, ~~s \in \R\]
whenever the integral converges.
In the 18th century, Euler first considered this transform
to solve second-order linear ordinary differential equations
with constant coefficients. One hundred years later,
Petzval and Spitzer named this transform after Laplace.
Doetsch \text{initiated} systematic investigations in 1920s.
The Laplace transform now is widely used for solving
ordinary and partial differential equations.
Therefore, it is a useful tool not only for \text{mathematicians} but also
for physicists and engineers (see, for example, \cite{Doe74}).

The Laplace transform has been generalized to the multidimensional setting
in order to solve ordinary and partial differential equations in boundary value problems
of several variables (see, for example, \cite{Deb88}).
Let $f$ be a compactly supported function that belongs to $L^1(\R^n)$.
The multidimensional Laplace transform of $f$ is defined as
\[\lt f(x)=\int_{\R^n}f(y)e^{-\inp{x}{y}}dy, ~~x\in\R^n.\]

The Laplace transform is also considered
on $\cb$, the set of $n$ dimensional convex bodies (i.e., compact convex sets) in $\R^n$.
The \emph{Laplace transform} of $K\in \cb$ is defined by
\[\lt K(x) = \lt (\kik{K})(x)=\int_Ke^{-\inp{x}{y}}dy,~~x\in \R^n,\]
where $\kik{K}$ is the indicator function of $K$.
Making use of the logarithmic version of this transform,
Klartag \cite{Kla06} improved Bourgain's esimate
on the slicing problem (or hyperplane conjecture),
which is one of the main open problems in the asymptotic theory
of convex bodies. It asks whether every convex body
of volume 1 has a hyperplane section through the origin
whose volume is greater than a universal constant
(see also \cite{KM12} for more information).

Noticing that both Laplace transforms are valuations,
we aim at a deeper understanding on these classical integral transforms.
A function $z$ defined on a lattice $(\Gamma,\vee,\wedge)$
and taking values in an abelian semigroup is called a \emph{valuation} if
\begin{equation}\label{eqn:dval}
    z(f\vee g)+z(f\wedge g)=z(f)+z(g)
\end{equation}
for all $f,g\in\Gamma$. A function $z$ defined on some subset $\Gamma_0$ of $\Gamma$
is called a valuation on $\Gamma_0$ if \eqref{eqn:dval} holds whenever
$f,g,f\vee g,f\wedge g\in\Gamma_0$.
Valuations were a key ingredient in Dehn's solution of Hilbert's Third Problem in 1901.
They are closely related to dissections and lie at the very heart of geo\-metry.
Here, valuations were considered on the space of convex bodies in $\R^n$, denoted by $\lcb$.
Perhaps the most famous result is Hadwiger's characterization theorem
which classifies all continuous and rigid motion invariant real valued valuations on $\lcb$.
Klain \cite{Kla95} provided a shorter proof of this beautiful result
based on the following characterization of the volume.

\begin{thm}[\cite{Had57,Kla95}]\label{thm:Klain}
    Suppose $\mu$ is a continuous rigid motion invariant and simple valuation on $\lcb$.
    Then there exists $c\in\R$ such that $\mu(K)=cV_n(K)$, for all $K\in\lcb$.
    Here, $V_n$ is the $n$ dimensional volume.
\end{thm}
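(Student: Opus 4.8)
The plan is to prove the statement along the classical lines, by induction on the dimension $n$, in the streamlined form due to Klain. The first move is a reduction to polytopes: since polytopes are dense in $\lcb$ for the Hausdorff metric and both $\mu$ and $V_n$ are continuous, it suffices to show $\mu(P)=cV_n(P)$ for every convex polytope $P$. The base case $n=1$ is direct: a simple valuation vanishes on singletons, so translation invariance gives $\mu([a,b])=g(b-a)$; cutting $[a,b]$ at an interior point (the overlap being a single point, where $\mu$ vanishes) gives $g(s+t)=g(s)+g(t)$, and continuity of $g$ forces $g(s)=cs$, i.e. $\mu([a,b])=cV_1([a,b])$.

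For the inductive step I would use the cylinder construction. Fix a hyperplane $H\cong\R^{n-1}$ and a unit segment $I$ orthogonal to $H$, and for a convex body $L$ in $H$ set $\nu(L):=\mu(L+I)$, the value of $\mu$ on the right prism over $L$. From $(L_1\cup L_2)+I=(L_1+I)\cup(L_2+I)$ and $(L_1+I)\cap(L_2+I)=(L_1\cap L_2)+I$ one reads off that $\nu$ is a valuation on $\mathcal{K}^{n-1}$; it is continuous because $L\mapsto L+I$ is; it is invariant under rigid motions of $H$, since each of these extends to a rigid motion of $\R^n$ fixing the direction of $I$; and it is simple, since $\dim L\le n-2$ forces $\dim(L+I)\le n-1$, where $\mu$ vanishes. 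By the inductive hypothesis $\nu=cV_{n-1}$ for some $c$. For fixed $L$, the function $h\mapsto\mu(L+hI)$ on $h>0$ is additive — slice the prism by a hyperplane parallel to $H$, the overlap being $(n-1)$-dimensional, where $\mu$ vanishes — and continuous, hence linear; at $h=1$ this gives $\mu(L+hI)=c\,h\,V_{n-1}(L)=cV_n(L+hI)$. Thus $\mu$ agrees with $cV_n$ on every right prism, and in particular on every box.

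What remains is the passage from prisms to arbitrary polytopes, and this is where I expect essentially all the difficulty to lie. Put $\bar\mu:=\mu-cV_n$: this is again a continuous, rigid-motion invariant, simple valuation, and it now vanishes on every prism, so the goal becomes $\bar\mu\equiv 0$. A continuous simple valuation extends to the convex ring by Groemer's theorem and is then additive across polyhedral dissections, all inclusion--exclusion corrections being supported on lower-dimensional faces; triangulating a polytope, it therefore suffices to prove $\bar\mu(\Delta)=0$ for every simplex $\Delta$. The intended mechanism is to dissect each simplex into orthoschemes (path-simplices) and to show that $\bar\mu$ annihilates every orthoscheme: the regular orthoscheme appears as one of the $n!$ pairwise congruent pieces of the standard triangulation of the cube, which already forces $\bar\mu$ to vanish on it, and a further dissection argument, using rigid-motion invariance and continuity, should propagate this to all orthoschemes; one then invokes the classical and genuinely nontrivial theorem that every simplex is dissectible into finitely many orthoschemes. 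Once $\bar\mu$ vanishes on all simplices it vanishes on all polytopes and, by continuity, on all of $\lcb$, which is the assertion.

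The hard core is thus this last block — making the chain ``simplex $\to$ orthoscheme $\to$ box'' quantitatively precise — and it is exactly the portion that Hadwiger's original argument treats at length and that Klain's paper reorganizes. The subsidiary points I would keep an eye on are the additivity of a simple valuation under polyhedral dissections (which is why continuity and the convex-ring extension enter), the repeated use of continuity to solve Cauchy-type functional equations, and a small orientation bookkeeping when matching the pieces of the cube, harmless because the congruences in question are realized by permutation matrices.
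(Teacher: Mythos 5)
First, a point of orientation: the paper does not prove Theorem \ref{thm:Klain} at all --- it is quoted from \cite{Had57,Kla95} as background --- so your proposal can only be measured against those classical proofs. Your first two blocks are correct and standard: the reduction to polytopes by continuity, the $n=1$ case via the Cauchy functional equation, and the inductive cylinder step showing $\mu(L+hI)=c\,h\,V_{n-1}(L)$ (one should add the small remark that rotation invariance makes the constant $c$ independent of the direction of the prism axis, so that $\bar\mu=\mu-cV_n$ vanishes on \emph{all} right prisms and boxes). The appeal to Groemer's extension theorem to get additivity over polyhedral dissections is also legitimate.

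The genuine gap is in the final block, and it sits exactly where you locate ``essentially all the difficulty.'' Two problems. (a) The $n!$ pieces $\{0\le x_{\sigma(1)}\le\cdots\le x_{\sigma(n)}\le1\}$ of the cube are carried to one another by permutation matrices, half of which have determinant $-1$; so ``pairwise congruent'' invokes improper isometries. If you read ``rigid motion'' as the full isometry group this is fine but should be said; if only proper motions are allowed, this step already requires the parity analysis and is not harmless bookkeeping. (b) More seriously, knowing $\bar\mu=0$ on the single congruence class of the unit-cube orthoscheme does not propagate to general orthoschemes by any argument you give: $\bar\mu$ has no homogeneity and only rigid motions are available, and for a box with unequal edge lengths the analogous triangulation consists of $n!$ orthoschemes that are pairwise \emph{non}-congruent, so vanishing on boxes yields a single linear relation among their values rather than the vanishing of each. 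Turning this system of relations (together with continuity and the orthoscheme dissection of simplices) into a proof is precisely the lengthy core of Hadwiger's original argument; the sentence ``a further dissection argument \ldots should propagate this to all orthoschemes'' asserts the conclusion at its hardest point. Note also that Klain's short proof, which is the second source cited, diverges from your route exactly here: it avoids orthoschemes altogether and instead combines the cylinder step with a reflection-parity analysis of the valuation on simplices and a continuity argument to force vanishing on all simplices. So either carry out the orthoscheme bookkeeping in full (essentially reproducing Hadwiger) or replace that block by Klain's symmetry argument; as written, the proof is incomplete.
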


Other important later contributions can be found in \cite{Had57,KR97,McM93,McS83}.
For more recent results, we refer to
\cite{Ale99,Ale01,Hab09,Hab12a,Hab12b,HP14a,HP14b,HL06,Kla96,Kla97,LYL15,Lud02b,Lud03,Lud06,LR99,LR10,Par14a,Par14b,SS06,Sch08,SW12,Wan11}.

With the first result of this paper, we characterize the Laplace transform on convex bodies.

\begin{thm}\label{thm:con}
A map $Z: \cb \to C(\R^n)$ is a continuous, positively $\GL$ covariant and logarithmic translation covariant valuation
if and only if there exists a constant $c \in \R$ such that
\begin{align*}
ZK=c \mathcal {L}K
\end{align*}
for every $K \in \cb$.
\end{thm}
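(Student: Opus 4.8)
\emph{Proof idea.} The ``if'' direction only needs the substitutions $y=\phi z$ and $y=z+v$ in $\lt K(x)=\int_Ke^{-\inp{x}{y}}\,dy$ together with the continuity of $K\mapsto\lt K$, so I shall prove only the converse. It is convenient to record the two covariance hypotheses in the explicit form $Z(\phi K)(x)=\abs{\det\phi}\,Z(K)(\phi^{\mathrm t}x)$ (for $\phi\in\GL$ of positive determinant) and $Z(K+v)(x)=e^{-\inp{x}{v}}Z(K)(x)$ (for $v\in\R^n$). The plan has four stages: (i) show that $Z$ is \emph{simple}, i.e.\ extends to a valuation $\bar Z$ on $\lcb$ vanishing on all lower-dimensional bodies; (ii) deduce that $\bar Z$ is additive over triangulations; (iii) use the covariances to express $\bar Z$ on every simplex through its value on the single simplex $T=\conv{0,e_1,\dots,e_n}$, and then pin down $\bar Z(T)$; (iv) pass from polytopes to all of $\cb$ by continuity and density.

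Stage (i) is where I expect the real work. The engine is a squeezing argument. Given a lower-dimensional $L$, choose coordinates so that $\mathrm{aff}(L)$ is a coordinate subspace, and let $\phi_\varepsilon\in\GL$ rescale every direction transverse to $\mathrm{aff}(L)$ by $\varepsilon^{-1}$, so that $\abs{\det\phi_\varepsilon^{-1}}=\varepsilon^{\,n-\dim L}$ while $\phi_\varepsilon(L+\varepsilon B^n)$ stays, together with its Hausdorff limit as $\varepsilon\to0$, in a compact subset of $\cb$. Positive $\GL$ covariance gives
\[ Z(L+\varepsilon B^n)(x)=\varepsilon^{\,n-\dim L}\,Z\bigl(\phi_\varepsilon(L+\varepsilon B^n)\bigr)\bigl((\phi_\varepsilon^{-1})^{\mathrm t}x\bigr), \]
whose right-hand side is bounded locally uniformly in $x$ (continuity of $Z$ on that compact family) and hence tends to $0$; the same mechanism kills $Z$ on any family of bodies collapsing onto a lower-dimensional set. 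Setting $\bar Z(L):=0$ then defines an extension, and it remains to check the valuation identity on $\lcb$. The only genuinely new case is $A,B,A\cup B\in\cb$ with $A\cup B$ convex but $A\cap B$ lower-dimensional; here a separation argument puts $A$ and $B$ on opposite closed sides of a hyperplane $H=\{h=0\}$, with $A=(A\cup B)\cap\{h\le0\}$ and $B=(A\cup B)\cap\{h\ge0\}$, and one applies the valuation hypothesis to the four convex bodies $(A\cup B)\cap\{h\le\varepsilon\}$, $(A\cup B)\cap\{h\ge-\varepsilon\}$, their union $A\cup B$, and their intersection — a slab collapsing onto $(A\cup B)\cap H$ — and lets $\varepsilon\to0$. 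The delicate point throughout is keeping all the relevant unions convex and controlling boundary effects.

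Granting (i), the rest is clean. By the inclusion--exclusion principle for valuations, $\bar Z(P)=\sum_i\bar Z(S_i)$ for every triangulation $P=\bigcup_iS_i$ of a polytope, the higher intersections being lower-dimensional and hence killed. Every full-dimensional simplex equals $v+\phi T$ for some $v\in\R^n$ and $\phi\in\GL$ of positive determinant (which can always be arranged), so $\bar Z(v+\phi T)(x)=e^{-\inp{x}{v}}\abs{\det\phi}\,\bar Z(T)(\phi^{\mathrm t}x)$; thus everything is determined by the single function $\bar Z(T)$. To identify it, take triangulations $T=\bigcup_iS_i^{(m)}$ with $\max_i\mathrm{diam}\,S_i^{(m)}\to0$, write $S_i^{(m)}=v_i^{(m)}+\phi_i^{(m)}T$ with $v_i^{(m)}\in S_i^{(m)}$ and $\abs{\det\phi_i^{(m)}}=n!\,V_n\bigl(S_i^{(m)}\bigr)$, and note that
\[ \bar Z(T)(x)=n!\sum_i V_n\bigl(S_i^{(m)}\bigr)\,e^{-\inp{x}{v_i^{(m)}}}\,\bar Z(T)\bigl((\phi_i^{(m)})^{\mathrm t}x\bigr). \]
Since $(\phi_i^{(m)})^{\mathrm t}x\to0$ uniformly on compacta and $\bar Z(T)$ is continuous, as $m\to\infty$ the right-hand side converges, locally uniformly, to $n!\,\bar Z(T)(0)\int_Te^{-\inp{x}{y}}\,dy=n!\,\bar Z(T)(0)\,\lt T(x)$; as the left-hand side does not depend on $m$, this forces $\bar Z(T)=c\,\lt T$ with $c:=n!\,\bar Z(T)(0)$. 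Since $\lt$ obeys the same two covariances, $\bar Z(S)=c\,\lt S$ for every simplex, hence $\bar Z(P)=c\,\lt P$ for every polytope, and finally $Z(K)=\bar Z(K)=c\,\lt K$ for all $K\in\cb$ by continuity and the density of polytopes in $\cb$. The constant $c$ is the same throughout, since it depends only on $\bar Z(T)(0)$.
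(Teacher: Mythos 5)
Your stages (ii)--(iv) are fine, but stage (i) rests on a step that the stated hypotheses do not support. The continuity granted by the theorem is pointwise in $x$: for each \emph{fixed} $x$, $ZK_i(x)\to ZK(x)$ whenever $K_i\to K$. In your squeezing argument both the body and the evaluation point move: you must control $Z\bigl(\phi_\varepsilon(L+\varepsilon B)\bigr)\bigl((\phi_\varepsilon^{-1})^{t}x\bigr)$, where the bodies $\phi_\varepsilon(L+\varepsilon B)$ converge to some fixed $K_0\in\cb$ while the argument converges to the projection of $x$. Pointwise continuity gives convergence of $Z(K_\varepsilon)(w)$ for each fixed $w$, but no equicontinuity or local uniformity, so the functions $Z(K_\varepsilon)$ could spike near the limit point while still converging pointwise; the parenthetical ``bounded locally uniformly in $x$ (continuity of $Z$ on that compact family)'' is therefore unjustified. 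The same defect hits the piece you actually need, the slab $(A\cup B)\cap\{-\varepsilon\le h\le\varepsilon\}$, which is not a $\GL$-image of a fixed body. The paper's Lemma~\ref{lem:ext} is engineered to avoid exactly this: with $o\in\intr K\cap H$ and $e_n\perp H$ it applies the valuation identity to $[K\cap H^+,-se_n]$ and $[K\cap H^-,se_n]$, whose intersection is the double cone $[K\cap H,\pm se_n]=\psi_s[K\cap H,\pm e_n]$ with $\psi_s$ diagonal, so covariance turns the collapsing term into $s\,Z[K\cap H,\pm e_n](x_1e_1+\dots+x_{n-1}e_{n-1}+sx_ne_n)$ --- a \emph{fixed} continuous function at a moving argument, which manifestly tends to $0$. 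Replace your ball/slab squeezing by this device (or strengthen the continuity hypothesis, which you are not entitled to do).

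Granting simplicity, the rest of your argument is correct and genuinely different from the paper, and in fact shorter. The paper first pins down $ZC^n$ on the line $\R e_1$ via a Cauchy-type equation coming from translated, rescaled cubes (Lemma~\ref{lem:cube}), then transfers the vanishing of $Z-c\lt$ from the cube to $T^n$ on that line through an intricate inclusion--exclusion over the sets $mT^n\cap(C^n+k_1e_1+\dots+k_ne_n)$ (Lemma~\ref{lem:inex}), and finally propagates the vanishing from $\R e_1$ to all of $\R^n$ through the functional equation obtained by splitting $T^n$ along $H_\lambda$ (Lemma~\ref{lem:sim}). Your refinement argument --- triangulate $T$ with mesh tending to $0$, write each piece as $v_i+\phi_iT$ with $\det\phi_i>0$ (arrangeable by reordering vertices), use both covariances, and pass to the Riemann-sum limit to get $\bar Z(T)=n!\,\bar Z(T)(0)\,\lt T$ --- reaches the core identity in one stroke, using only the continuity of the single function $\bar Z(T)$ at the origin, additivity of simple valuations over triangulations (the same inclusion--exclusion extension the paper invokes), and the existence of fine triangulations (iterated barycentric subdivision); the Riemann-sum limit works because $\sum_iV_n(S_i^{(m)})=1/n!$ and $e^{-\inp{x}{\cdot}}$ is uniformly continuous on $T$. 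What the paper's longer route buys is the isolation of the one-dimensional statement for the cube, the true analogue of the classical Cauchy-equation characterization; what yours buys is a substantially more direct proof of the hardest part. With stage (i) repaired as above, your proof is complete.
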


Throughout this paper, without further remark, we briefly write positively $\GL$ covariant as $\GL$ covariant;
see Section \ref{sec:pre} for definitions of $\GL$ covariance and logarithmic translation covariance.
We call $Z: \cb \to C(\R^n)$ \emph{continuous} if for every $x\in \R^n$, we have $ZK_i(x) \to ZK(x)$ whenever $K_i \to K$ with respected to the Hausdorff metric,
where $K_i,K \in \cb$.

Notice that $\lt K(0)=V_n(K)$ holds for all $K\in\cb$.
Thus, this characterization is a generalization of Theorem \ref{thm:Klain}.

Valuations are also considered on spaces of real valued functions.
Here, we take the pointwise maximum and minimum as the join and meet, respectively.
Since the indicator functions of convex bodies provide a one-to-one correspondence with convex bodies,
valuations on function spaces are generalizations of valuations on convex bodies.
Valuations on function spaces have been studied since 2010.
Tsang \cite{Tsa10} characterized real valued valuations on $L^p$-spaces.
Kone \cite{Kon14} generalized this characterization to Orlicz spaces.
As for valuations on Sobolev spaces, Ludwig \cite{Lud11b,Lud12} characterized
the Fisher information matrix and the Lutwak-Yang-Zhang body.
Other recent and interesting characterizations can be found in
\cite{Cav15,CC15,Tsa12,BGW13,Lud13,Ma15,Wang14,Obe14}.

With the second result of this paper, we characterize the Laplace transform on functions
based on Theorem \ref{thm:con} and the natural connection between
indicator functions and convex bodies.
Let $\la$ denote the space of compactly supported functions that belong to $L^1(\R^n)$.
We call $z:\la\rightarrow\conr$ \emph{continuous} if for every $x\in \R^n$,
we have $z(f_i)(x) \to z(f)(x)$ whenever $f_i\to f$ in $L^1(\R^n)$.

\begin{thm}\label{thm:laplace}
    A map $z:\la\rightarrow\conr$ is a continuous,
    positively $\GL$ covariant and logarithmic translation covariant valuation if and only if
    there exists a continuous function $h$ on $\R$ with the properties that
    \begin{equation}\label{eqn:zero}
    	h(0)=0
    \end{equation}
    and that there exists a constant $\gamma\geq0$ that
    \begin{equation}\label{eqn:growth}
    	\abs{h(\a)}\leq\gamma\abs{\a}
    \end{equation}
    for all $\a\in\R$,
    such that
    \[z(f) = \lt(h\circ f)\]
    for every $f\in\la$.
\end{thm}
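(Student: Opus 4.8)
The plan is to prove the two implications separately, reducing the harder converse direction to Theorem~\ref{thm:con}. For the ``if'' direction, fix a continuous $h$ with \eqref{eqn:zero} and \eqref{eqn:growth}. Because $h(0)=0$, the function $h\circ f$ vanishes wherever $f$ does and hence is compactly supported, while \eqref{eqn:growth} gives $\abs{h\circ f}\le\gamma\abs{f}$, so $h\circ f\in\la$ and $\lt(h\circ f)\in\conr$; thus $z(f):=\lt(h\circ f)$ is well defined. The pointwise identity $h(\max\{a,b\})+h(\min\{a,b\})=h(a)+h(b)$ (trivial, since $\{\max,\min\}=\{a,b\}$ as multisets) yields $h\circ(f\vee g)+h\circ(f\wedge g)=h\circ f+h\circ g$, so applying the linear transform $\lt$ gives \eqref{eqn:dval}. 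The $\GL$ and logarithmic translation covariances of $z$ follow from $h\circ(f\circ\phi^{-1})=(h\circ f)\circ\phi^{-1}$ and $h\circ\bigl(f(\,\cdot\,-v)\bigr)=(h\circ f)(\,\cdot\,-v)$ together with the corresponding covariances of $\lt$, and continuity is inherited from the continuity of $h$ through the domination $\abs{h\circ f_i}\le\gamma\abs{f_i}$ and (generalized) dominated convergence.

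For the converse, the first step is to descend from functions to convex bodies. Fix $\a\in\R$ and put $Z_\a K:=z(\a\kik{K})$ for $K\in\cb$. Since $\a\kik{\phi K}=(\a\kik{K})\circ\phi^{-1}$ and $\a\kik{K+v}=(\a\kik{K})(\,\cdot\,-v)$, the map $Z_\a$ inherits the $\GL$ and logarithmic translation covariances of $z$; since $K_i\to K$ in the Hausdorff metric forces $\a\kik{K_i}\to\a\kik{K}$ in $L^1$, $Z_\a$ is continuous; and since $(\a\kik{K})\vee(\a\kik{L})=\a\kik{K\cup L}$ and $(\a\kik{K})\wedge(\a\kik{L})=\a\kik{K\cap L}$ for $\a>0$ (the two roles being interchanged for $\a<0$), $Z_\a$ is a valuation on $\cb$. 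Theorem~\ref{thm:con} then supplies $c(\a)\in\R$ with $z(\a\kik{K})=c(\a)\lt K$ for all $K\in\cb$; set $h:=c$. The choice $\a=0$ gives $c(0)\lt K\equiv z(0)$ for every $K$, and as $\lt K$ is not identically zero this forces $z(0)=0$ and $h(0)=0$, i.e.\ \eqref{eqn:zero}; applying continuity of $z$ to $\a_i\kik{K}\to\a\kik{K}$ for a fixed body $K$ with $\lt K(0)=V_n(K)\neq0$ shows that $h$ is continuous.

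The second step propagates $z(f)=\lt(h\circ f)$ to ``layered'' simple functions $f=\sum_{j=1}^{m}\beta_j\kik{P_j}$ with $P_1\supseteq\cdots\supseteq P_m$. One first extends $z(\a\kik{P})=c(\a)\lt(\kik{P})$ from convex $P$ to finite unions of convex bodies, by noting that (for fixed $\a$) $P\mapsto z(\a\kik{P})$ is a valuation on the polyconvex sets which agrees on $\cb$ with the valuation $c(\a)\lt$, hence equals its unique inclusion--exclusion extension. Then one proves $z(f)=\lt(h\circ f)=\sum_j h(\beta_j)\bigl(\lt(\kik{P_j})-\lt(\kik{P_{j+1}})\bigr)$ by induction on $m$: peel off the outermost shell by writing $f$ as a join of two layered functions with fewer levels (for a positive outer level) or as a meet (for a negative one), apply \eqref{eqn:dval}, and invoke the inductive hypothesis. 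Functions of mixed sign are reduced to this case via $z(f)=z(f\vee0)+z(f\wedge0)$ --- which is \eqref{eqn:dval} for the pair $(f,0)$ together with $z(0)=0$ --- and $h\circ(f\vee0)+h\circ(f\wedge0)=h\circ f$.

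It remains to establish \eqref{eqn:growth} and then to pass to general $f\in\la$, and I expect the growth bound to be the crux. Since $z$ is $\conr$-valued, $\lt(h\circ f)$ must be finite (and continuous) for \emph{every} $f\in\la$; the idea is to show that if $\sup_{\a\neq0}\abs{h(\a)}/\abs{\a}=\infty$, then, using the continuity of $h$ and $h(0)=0$, one can build $f\in\la$ supported in a single box, together with layered simple functions $f_N$ supported in that box with $f_N\to f$ in $L^1$, for which $z(f_N)(0)=\lt(h\circ f_N)(0)=\int_{\R^n}h\bigl(f_N(y)\bigr)\,dy$ diverges --- contradicting $z(f_N)(0)\to z(f)(0)\in\R$, which holds by continuity of $z$. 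One then takes $\gamma:=\sup_{\a\neq0}\abs{h(\a)}/\abs{\a}<\infty$. With \eqref{eqn:growth} in hand, any $f\in\la$ is an $L^1$-limit of layered simple functions $f_N$ with supports in a single compact set: $z(f_N)\to z(f)$ pointwise by continuity of $z$, and $\lt(h\circ f_N)\to\lt(h\circ f)$ pointwise because $\abs{h\circ f_N}\le\gamma\abs{f_N}$ furnishes an $L^1$ domination over that compact set (once more via the generalized dominated convergence theorem). Hence $z(f)=\lt(h\circ f)$ for every $f\in\la$, which completes the proof.
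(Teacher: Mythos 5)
Your proposal is correct and its overall architecture coincides with the paper's: verify the ``if'' direction directly; for the converse, feed $\a\mapsto z(\a\kik{K})$ into Theorem~\ref{thm:con} to produce $h$; extend the representation to simple functions; obtain \eqref{eqn:growth} by a thin-set contradiction against continuity; and pass to general $f\in\la$ by approximation and dominated convergence. The one genuine divergence is the middle step. The paper (Lemmas~\ref{thm:char} and~\ref{thm:sf}) first upgrades $z(\a\kik{K})=h(\a)\lt\kik{K}$ from convex bodies to arbitrary bounded Borel sets $E$ --- approximating $\kik{E}$ in $L^1$ by indicators of finite unions of cubes --- and then writes a simple function with \emph{disjoint} supports as a join, so that all pairwise meets vanish (up to the zero function) and inclusion--exclusion collapses to a single sum. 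You instead stay inside polyconvex sets and use Tsang-style \emph{nested} layered functions with a peeling induction; both routes work, and the paper's avoids the sign case analysis that the peeling requires, while your explicit splitting $z(f)=z(f\vee0)+z(f\wedge0)$ is cleaner than the paper's treatment of negative values in Lemma~\ref{thm:sf}, whose identity $g=(\a_1\kik{E_1})\vee\cdots\vee(\a_m\kik{E_m})$ is literally valid only for nonnegative $\a_i$. Finally, your growth-bound step is only a sketch; the construction you would need is exactly the paper's: choose $\a_j$ with $\abs{h(\a_j)}>2^j\abs{\a_j}$ and boxes $E_j$ of volume $2^{-j}/\abs{\a_j}$, so that $\a_j\kik{E_j}\to0$ in $L^1$ while $\abs{z(\a_j\kik{E_j})(o)}=\abs{h(\a_j)}V_n(E_j)\geq1$, contradicting continuity of $z$ at the zero function. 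With that supplied, your argument is complete.
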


If we further assume that $z$ is $1$-homogeneous, that is, $z(sf)=s z(f)$
for all $s\in\R$ and $f\in\la$, then we obtain the Laplace transform.

\begin{cor}
    A map $z:\la\rightarrow\conr$ is a continuous, $1$-homogeneous, positively
    $\GL$ covariant and logarithmic translation covariant valuation if and only if
    there exists a constant $c\in\R$ such that
    $$z(f)=c\lt f,$$ for every $f\in\la$.
\end{cor}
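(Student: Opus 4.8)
The plan is to deduce the corollary directly from Theorem~\ref{thm:laplace}, using $1$-homogeneity only to force the function $h$ produced there to be linear.

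For the ``if'' direction I would observe that the map $f\mapsto c\lt f$ is precisely the special case $h(\a)=c\a$ of Theorem~\ref{thm:laplace}, so it is automatically a continuous, positively $\GL$ covariant and logarithmic translation covariant valuation, and it is $1$-homogeneous because $\lt$ is linear. (If one prefers to argue from scratch, the valuation property follows from $\max(f,g)+\min(f,g)=f+g$ together with linearity of the integral, while continuity and the two covariance properties are exactly those recorded for $\lt$ in the introduction.)

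For the ``only if'' direction I would first apply Theorem~\ref{thm:laplace}---whose hypotheses are all satisfied, homogeneity not being among them---to obtain a continuous $h:\R\to\R$ with $h(0)=0$ and $\abs{h(\a)}\le\gamma\abs{\a}$ such that $z(f)=\lt(h\circ f)$ for all $f\in\la$. Then I would test on rescaled indicator functions: fix $K\in\cb$ and, for $s\in\R$, note that $h(0)=0$ gives $h\circ(s\kik{K})=h(s)\kik{K}$, so that $z(s\kik{K})=h(s)\,\lt K$; comparing this with $z(s\kik{K})=s\,z(\kik{K})=s\,h(1)\,\lt K$, which follows from $1$-homogeneity, and evaluating at the origin, where $\lt K(0)=V_n(K)>0$, forces $h(s)=h(1)\,s$ for every $s\in\R$. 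Writing $c=h(1)$ then yields $h\circ f=cf$ and hence $z(f)=\lt(cf)=c\lt f$ for every $f\in\la$.

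I do not expect a genuine obstacle here: once Theorem~\ref{thm:laplace} is in hand, the corollary is a short reduction. The only mild subtlety is the choice of test functions---the rescaled indicator functions $s\kik{K}$ of a fixed $K\in\cb$ simultaneously exhibit $h$ explicitly (via $z(s\kik{K})=h(s)\,\lt K$) and license the cancellation of $\lt K$ through the identity $\lt K(0)=V_n(K)>0$, so that injectivity of the Laplace transform is never needed.
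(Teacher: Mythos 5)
Your proposal is correct and follows essentially the route the paper intends: the corollary is stated as an immediate consequence of Theorem \ref{thm:laplace}, with $1$-homogeneity used exactly as you do to force $h(\a)=h(1)\a$. Your test-function computation with $s\kik{K}$ and the evaluation at the origin via $\lt K(0)=V_n(K)>0$ is a clean way to make that reduction explicit.
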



\section{Preliminaries and Notation}
\label{sec:pre}

Our setting is the $n$-dimensional Euclidean space $\R^n$ with the standard basis $\{e_1,\dots,e_n\}$, where $n \geq 1$.
The convex hull of a set $A$ is denoted by $[A]$ and the convex hull of a set $A$ and a point $x \in \R^n$ will be briefly written as $[A,x]$ instead of $[A,\{x\}]$.
A hyperplane is an $n-1$ dimensional affine space in $\R^n$.
The unit cube $C^n=\sum_{1\leq i \leq n}[o,e_i]$ and the standard simplex $T^n=[o,e_1,\dots,e_n]$ are two important convex bodies in this paper.

The Hausdorff distance of $K,L \in \lcb$ is
\begin{align*}
d(K,L) = \inf \{\varepsilon >0:  K \subset L + \varepsilon B, ~~L \subset K + \varepsilon B \}.
\end{align*}

The norm on the space $\la$ is the ordinary $L^1$ norm which is denoted by $\norm{\cdot}$.

A map $z:\la \to \conr$ is called \emph{$\GL$ covariant} if
\begin{equation}\label{eqn:gln}
    z(f\circ\phi^{-1})(x)=\abs{\det\phi}z(f)(\phi^t x)
\end{equation}
for all $f \in \la$, $\phi\in\GL$ and $x\in\R^n$. In this paper, we actually deal with positive $\GL$ covariance,
that is \eqref{eqn:gln} is supposed to hold for all $\phi \in \GL$ that have positive determinant.
Also, a map $z:\la \to \conr$ is called \emph{logarithmic translation covariant} if
\begin{align*}
z(f(\cdot-t))(x) =& \me^{-\inp{x}{t}} z (f)(x)
\end{align*}
for all $f\in\la$ and $t,x\in\R^n$. This definition is motivated by the relation
\begin{align*}
\log \lt (f(\cdot-t))(x) = -\inp{x}{t} +  \log \lt f(x)
\end{align*}
(see Theorem \ref{thm:prop}).

A map $Z:\cb \to \conr$ is called \emph{$\GL$ covariant} if
\[Z(\phi K)(x)=\abs{\det\phi}ZK(\phi^t x)\]
for all $K \in \cb$, $\phi\in\GL$ and $x\in\R^n$.
Also, a map $z:\cb \to \conr$ is called \emph{logarithmic translation covariant} if
\begin{align*}
Z(K+t)(x)=\me^{-t \cdot x}ZK(x)
\end{align*}
for all $K\in\cb$ and $t,x\in\R^n$. Again, it is motivated by the relation
\begin{align*}
\log \lt (K+t)(x) = -t \cdot x + \log \lt K(x)
\end{align*}
(see Theorem \ref{thm:lapcon}).
If a valuation vanishes on lower dimensional convex bodies, we call it \emph{simple}.

As we will see in Lemma \ref{lem:rel}, if $z:\la \to \conr$ is continuous, $\GL$ covariant and logarithmic translation covariant, then $Z:\cb \to \conr$ defined by $ZK = z(\kik{K})$ is also continuous, $\GL$ covariant and logarithmic translation covariant, respectively.

For the constant zero function, if $z:\la \to \conr$ is $\GL$ covariant and logarithmic translation covariant, then
\begin{align}\label{eq:z0}
z(0) \equiv 0.
\end{align}
Indeed, $z(0)(\phi^t x)= z(0)(x)$ for any $\phi \in \GL$. Let $x=e_1$. We have that $z(0)$ is a constant function on $\R^n \setminus \{0\}$. The continuity of the function $z(0)$ now gives that $z(0) \equiv c$ on $\R^n$ for a constant $c \in \R$. Since $z$ is also logarithmic translation covariant, $z(0)(x) = e^{-t \cdot x} z(0)(x)$ for any $x,t \in \R^n$. Hence $z(0) \equiv 0$.


\section{Laplace transforms}

In this section, we study some properties of Laplace transforms.

\begin{thm}\label{thm:prop}
    Let $h$ be a continuous function on $\R$ satisfying \eqref{eqn:zero} and \eqref{eqn:growth}.
    If a map $z:\la\rightarrow\conr$ satisfies
    \[z(f)(x)=\int_{\R^n}(h\circ f)(y)\me^{-\inp{x}{y}}dy\]
    for every $x\in\R^n$ and $f\in\la$,
    then $z$ is a continuous, $\GL$ covariant and logarithmic translation covariant valuation.

    In particular, if $h(\a)=\a$ for all $\a\in\R$, the Laplace transform $\mathcal {L}$ on $\la$ is a continuous, $\GL$ covariant and logarithmic translation covariant valuation.
\end{thm}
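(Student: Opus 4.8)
\section*{Proof proposal for Theorem \ref{thm:prop}}

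The plan is to verify the three properties one at a time, since each reduces to a short manipulation of the defining integral $z(f)(x)=\int_{\R^n}(h\circ f)(y)\me^{-\inp{x}{y}}dy$. First, however, I would address a point that must come before everything else: well-definedness. For $f\in\la$ the growth condition \eqref{eqn:growth} gives $\abs{(h\circ f)(y)}\le\gamma\abs{f(y)}$, so $h\circ f\in L^1(\R^n)$, and since $h(0)=0$ by \eqref{eqn:zero}, the function $h\circ f$ is supported where $f$ is, hence compactly supported; thus $h\circ f\in\la$ and the integral converges absolutely for every $x\in\R^n$ (the factor $\me^{-\inp{x}{y}}$ is bounded on the compact support). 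In particular $z(f)=\lt(h\circ f)$ makes sense, and continuity of $z(f)$ as a function on $\R^n$ follows from dominated convergence, with dominating function $\gamma\abs{f(y)}\me^{R\abs{y}}\kik{\operatorname{supp}f}$ for $x$ in a ball of radius $R$.

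For the valuation property, the key observation is that $h\circ\max\{f,g\}+h\circ\min\{f,g\}=h\circ f+h\circ g$ pointwise: at each $y$ the two numbers $\max\{f(y),g(y)\}$ and $\min\{f(y),g(y)\}$ are just $f(y)$ and $g(y)$ in some order, so applying $h$ and summing gives $h(f(y))+h(g(y))$ regardless. Integrating against $\me^{-\inp{x}{y}}$ and using linearity of the integral yields $z(f\vee g)(x)+z(f\wedge g)(x)=z(f)(x)+z(g)(x)$, so $z$ is a valuation. For $\GL$ covariance, I would substitute $y=\phi z$ in $z(f\circ\phi^{-1})(x)=\int_{\R^n}h(f(\phi^{-1}y))\me^{-\inp{x}{y}}dy$; the change of variables contributes $\abs{\det\phi}$ and turns $\inp{x}{y}=\inp{x}{\phi z}$ into $\inp{\phi^t x}{z}$, giving exactly $\abs{\det\phi}\,z(f)(\phi^t x)$ as required by \eqref{eqn:gln} (for $\phi$ with positive determinant the absolute value on the Jacobian is $\det\phi$, but \eqref{eqn:gln} is stated with $\abs{\det\phi}$ anyway). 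For logarithmic translation covariance, substitute $y=z+t$ in $z(f(\cdot-t))(x)=\int_{\R^n}h(f(y-t))\me^{-\inp{x}{y}}dy$ to pull out the factor $\me^{-\inp{x}{t}}$ and leave $\int_{\R^n}h(f(z))\me^{-\inp{x}{z}}dz=z(f)(x)$.

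For continuity of $z$ as a map $\la\to\conr$, suppose $f_i\to f$ in $L^1(\R^n)$; fix $x\in\R^n$. I would like to conclude $\int(h\circ f_i)\me^{-\inp{x}{\cdot}}\to\int(h\circ f)\me^{-\inp{x}{\cdot}}$, and here is the one genuine subtlety: $L^1$ convergence does not by itself control the supports, so the weight $\me^{-\inp{x}{y}}$ (unbounded on $\R^n$) is not obviously dominated. This is the main obstacle, and I expect the fix to be that the definition of $\la$ should be read with a \emph{uniform} compact support on convergent sequences, or else one argues as follows: pass to a subsequence with $f_i\to f$ a.e.\ and $\sum\norm{f_{i+1}-f_i}<\infty$, so that $g:=\abs{f}+\sum\abs{f_{i+1}-f_i}\in L^1$ dominates all $\abs{f_i}$; then $\abs{(h\circ f_i)(y)\me^{-\inp{x}{y}}}\le\gamma g(y)\me^{-\inp{x}{y}}$, and if additionally all the $f_i$ (and $f$) are supported in a common compact set $C$ — which is the natural reading, since compact support is part of the ambient hypothesis and the relevant applications in the paper involve indicator functions of bounded families — then $\gamma g(y)\me^{-\inp{x}{y}}\kik{C}\in L^1$ dominates and dominated convergence finishes the job along the subsequence; a standard subsequence-of-subsequence argument promotes this to the full sequence. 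Finally, the last sentence is immediate: taking $h(\a)=\a$ satisfies \eqref{eqn:zero} trivially and \eqref{eqn:growth} with $\gamma=1$, and then $z(f)=\lt(h\circ f)=\lt f$, so $\lt$ itself has all three properties.
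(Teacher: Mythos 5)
Your proposal is correct and, for the valuation identity and the two covariance properties, it is essentially the paper's own argument: the paper splits $\R^n$ into $\set{f\le g}$ and its complement rather than invoking the pointwise identity $h\circ(f\vee g)+h\circ(f\wedge g)=h\circ f+h\circ g$, but this is the same computation, and the two change-of-variables steps are identical. The only genuine divergence is in the continuity of $z$, and the subtlety you isolate there is real. The paper also argues by subsequences, but it first asserts that $y\mapsto f_{i_j}(y)\me^{-\inp{x}{y}}$ converges to $y\mapsto f(y)\me^{-\inp{x}{y}}$ in $L^1$, then extracts (via the Lieb--Loss subsequence lemma) a further subsequence converging a.e.\ and dominated by some $F_x\in L^1$, and finishes with dominated convergence using $\abs{h\circ \fijk(y)}\me^{-\inp{x}{y}}\le\gamma F_x(y)$. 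The asserted weighted $L^1$ convergence is precisely where the unbounded weight hides: plain $L^1$ convergence of compactly supported functions does not imply it when the supports escape to infinity (for instance $f_i$ a multiple of $\kik{B(ie_1,1)}$ with $\norm{f_i}=\me^{-i/2}$ satisfies $f_i\to 0$ in $L^1$ while $\lt f_i(-e_1)\to\infty$), so the paper's step tacitly needs the same uniform-compact-support control that you state explicitly; your version has the merit of locating the hypothesis that is actually used. In the places where the theorem is applied later (indicators of a Hausdorff-convergent sequence of bodies, monotone simple-function approximations of a fixed $f\in\la$) the supports are uniformly bounded, so both arguments are sound there. Your preliminary well-definedness paragraph and the final specialization $h(\a)=\a$, $\gamma=1$ are fine; the paper leaves these implicit.
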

\begin{proof}
    Let $f,g\in\la$ and $E=\set{x\in\R^n:f(x)\leq g(x)}$. Then
    \begin{align*}
    	z(f\vee g)(x) =& \int_{\R^n}h\circ(f\vee g)(y)e^{-\inp{x}{y}}dy\\
	   =& \int_E(h\circ g)(y)e^{-\inp{x}{y}}dy+\int_{\R^n\setminus E}(h\circ f)(y)e^{-\inp{x}{y}}dy
    \end{align*}
    for every $x\in\R^n$. Similarly, we have
    \begin{align*}
    	z(f\wedge g)(x) =& \int_{\R^n}h\circ(f\wedge g)(y)e^{-\inp{x}{y}}dy\\
	   =& \int_E(h\circ f)(y)e^{-\inp{x}{y}}dy+\int_{\R^n\setminus E}(h\circ g)(y)e^{-\inp{x}{y}}dy
    \end{align*}
    for every $x\in\R^n$.  Thus,
    \begin{align*}
    	& z(f\vee g)(x) + z(f\wedge g)(x)\\
	=& \int_{\R^n}(h\circ f)(y)e^{-\inp{x}{y}}dy + \int_{\R^n}(h\circ g)(y)e^{-\inp{x}{y}}dy\\
	=& z(f)(x) + z(g)(x)
    \end{align*}
    for every $x\in\R^n$.

    Next, we are going to show that $z$ is continuous.
    Let $f\in\la$ and let $\set{f_i}$ be a sequence in $\la$ that converges to $f$ in $L^1(\R^n)$.
    We will show the continuity of $z$ by showing that
    for every subsequence $\xset{z(f_{i_j})(x)}$ of $\set{z(f_i)(x)}$,
    there exists a subsequence $\xset{z(\fijk)(x)}$ that converges to $z(f)(x)$
    for every $x\in\R^n$.

Let $\xset{f_{i_j}}$ be a subsequence of $\set{f_i}$ and $y\in\R^n$.
Then, for every $x \in \R^n$, the sequence of functions $y \mapsto f_{i_j}(y) e^{-x \cdot y}$ converges to the function $y \mapsto f(y) e^{-x \cdot y}$ as $j \to \infty$ with respect to the $L^1$ norm.
It follows that there exists a subsequence $\xset{\fijk}$ of $\xset{f_{i_j}}$ and a nonnegative function $F_x \in L^1(\R^n)$ such that\\
(i) $\fijk(y)e^{-x \cdot y} \to f(y)e^{-x \cdot y}$ almost every $y$ with respect to Lebesgue measure; \\
(ii) $\xabs{\fijk(y)} e^{-x \cdot y} \leq F_x(y)$ almost every $y$ with respect to Lebesgue measure (see \cite[Section 2.7]{LL01}).
Since $h$ is continuous, we obtain
\[h\circ\fijk \to h\circ f \quad\text{a.e.}\]
Also since $h$ satisfies \eqref{eqn:growth}, we have
    \begin{align*}
    |h \circ \fijk (y)| \leq \gamma |\fijk(y)| \leq \gamma F_x(y) e^{x \cdot y}.
    \end{align*}
Note that $\int_{\R^n} F_x (y)e^{x \cdot y} \cdot e^{-x \cdot y}dy < \infty$.
We conclude from the dominated convergence theorem that
\begin{align*}
z(f)(x) = \int_{\R^n}(h\circ f)(y)\me^{-\inp{x}{y}}dy = \lim\limits_{k \to \infty}  \int_{\R^n} (h\circ \fijk)(y)\me^{-\inp{x}{y}}dy = \lim\limits_{k \to \infty} z(\fijk)(x).
\end{align*}

    Moreover, for $\phi\in\GL$,
    \begin{align*}
        z(f\circ\phi^{-1})(x) =& \int_{\R^n}(h\circ f\circ\phi^{-1})(y)\me^{-\inp{x}{y}}dy\\
        =& \abs{\det\phi}\int_{\R^n}(h\circ f)(w)\me^{-\inp{x}{(\phi w)}}dw\\
        =& \abs{\det\phi}\int_{\R^n}(h\circ f)(w)\me^{-\inp{(\phi^tx)}{w}}dw\\
        =& \abs{\det\phi}z(f)(\phi^tx)
    \end{align*}
    for every $x\in\R^n$ and $f\in\la$. Finally, let $t\in\R^n$. Then
    \begin{align*}
        z(f(\cdot-t))(x) =& \int_{\R^n}(h\circ f)(y-t))\me^{-\inp{x}{y}}dy\\
        =& \int_{\R^n}(h\circ f)(w)\me^{-\inp{x}{(w+t)}}dw\\
        =& \me^{-\inp{x}{t}}\int_{\R^n}(h\circ f)(w)\me^{-\inp{x}{w}}dw\\
        =& \me^{-\inp{x}{t}}z(f)(x)
    \end{align*}
    for every $x\in\R^n$ and $f\in\la$.
\end{proof}

Next, we turn to the Laplace transform on convex bodies.
\begin{lem}\label{lem:rel}
If $z: \la \to \conr$ is a continuous, $\GL$ covariant and logarithmic translation covariant valuation, then for any $\alpha \in \R$, $Z: \cb \to \conr$ defined by
$$ZK = z (\alpha \kik{K})$$
is a continuous, $\GL$ covariant and logarithmic translation covariant valuation on $\cb$.
\end{lem}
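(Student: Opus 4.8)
The plan is to transfer each of the three properties---valuation, continuity, and the two covariances---from $z$ on $\la$ to $Z$ on $\cb$ by writing everything in terms of indicator functions and using elementary identities relating set operations on convex bodies to pointwise lattice operations on their (scaled) indicator functions.

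First I would handle the valuation property. For $K,L\in\cb$ with $K\cup L\in\cb$, one has the pointwise identities $\kik{K\cup L}=\kik{K}\vee\kik{L}$ and $\kik{K\cap L}=\kik{K}\wedge\kik{L}$, hence, since $\a\geq 0$ (or, in general, after checking both signs of $\a$), $\a\kik{K\cup L}=(\a\kik{K})\vee(\a\kik{L})$ and $\a\kik{K\cap L}=(\a\kik{K})\wedge(\a\kik{L})$; if $\a<0$ the roles of $\vee$ and $\wedge$ are simply exchanged. Note that $\a\kik{K},\a\kik{L},\a\kik{K\cup L},\a\kik{K\cap L}$ all lie in $\la$. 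Applying the valuation equation \eqref{eqn:dval} for $z$ to the pair $f=\a\kik{K}$, $g=\a\kik{L}$ then yields $Z(K\cup L)+Z(K\cap L)=ZK+ZL$, so $Z$ is a valuation on $\cb$.

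Next, continuity. If $K_i\to K$ in the Hausdorff metric with $K_i,K\in\cb$, then $\kik{K_i}\to\kik{K}$ in $L^1(\R^n)$: the symmetric difference $K_i\triangle K$ is contained in a thin shell around $\bd K$ whose volume tends to $0$, using that $K$ is $n$-dimensional and the $K_i$ are eventually contained in a fixed bounded set. Hence $\a\kik{K_i}\to\a\kik{K}$ in $L^1(\R^n)$, and the continuity of $z$ gives $ZK_i(x)=z(\a\kik{K_i})(x)\to z(\a\kik{K})(x)=ZK(x)$ for every $x\in\R^n$, which is exactly the continuity of $Z$. For $\GL$ covariance, for $\phi\in\GL$ one has $\kik{\phi K}=\kik{K}\circ\phi^{-1}$, so $\a\kik{\phi K}=(\a\kik{K})\circ\phi^{-1}$, and plugging $f=\a\kik{K}$ into \eqref{eqn:gln} gives $Z(\phi K)(x)=z((\a\kik{K})\circ\phi^{-1})(x)=\abs{\det\phi}\,z(\a\kik{K})(\phi^t x)=\abs{\det\phi}\,ZK(\phi^t x)$. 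Likewise $\kik{K+t}=\kik{K}(\cdot-t)$, so $\a\kik{K+t}=(\a\kik{K})(\cdot-t)$, and logarithmic translation covariance of $z$ yields $Z(K+t)(x)=\me^{-\inp{x}{t}}ZK(x)$.

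None of these steps presents a serious obstacle; the one point requiring a little care is the reduction of Hausdorff convergence to $L^1$ convergence of indicator functions, where I would invoke that $K$ has nonempty interior so that $V_n(\bd K)=0$ and the shells $\{x:\operatorname{dist}(x,K)\le\varepsilon\}\setminus\intr K$ shrink in volume; and, secondarily, bookkeeping the sign of $\a$ so that the $\vee/\wedge$ bookkeeping in the valuation step is correct in all cases. Everything else is a direct substitution into the defining identities for $z$.
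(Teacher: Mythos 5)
Your proposal is correct and follows essentially the same route as the paper: pass everything through $\alpha\kik{K}$, use the lattice identities for indicator functions (noting, as you do and the paper does not bother to, that $\vee$ and $\wedge$ swap when $\alpha<0$, which leaves the valuation sum unchanged), substitute directly into the covariance identities, and reduce Hausdorff convergence to $L^1$ convergence of indicators via a thin shell around $\bd K$, whose volume the paper bounds explicitly by $2\varepsilon S(K+B)$. One cosmetic caution: the shell should be the two-sided $\varepsilon$-neighbourhood of $\bd K$, since your set $\set{x:\operatorname{dist}(x,K)\le\varepsilon}\setminus\intr K$ misses the portion of $K\setminus K_i$ that lies in $\intr K$ within distance $\varepsilon$ of the boundary.
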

\begin{proof}
    For $K,L,K\cup L, K \cap L\in\cb$, we have
    \begin{align*}
        Z(K\cup L)+Z(K\cap L) &= z(\alpha \kik{K\cup L})+z(\alpha \kik{K\cap L})\\
        &= z((\alpha \kik{K})\vee (\alpha \kik{L}))+z((\alpha \kik{K})\wedge (\alpha \kik{L}))\\
        &= z(\alpha \kik{K})+z(\alpha \kik{L})\\
        &= ZK+ZL.
    \end{align*}
    Also, for a sequence $\set{K_i}$ in $\cb$ that converges to $K\in\cb$
    as $i\rightarrow\infty$, we have $\norm{\alpha \kik{K_i} - \alpha \kik{K}} \to 0$ as $i\rightarrow\infty$.
    Indeed, for any $0 < \varepsilon \leq 1$, we have
    \begin{align*}
    K_i \subset K + \varepsilon B, ~~K \subset K_i + \varepsilon B
    \end{align*}
    for sufficiently large $i$. Hence $(K_i \setminus K) \cup (K \setminus K_i) \subset \{x :~ \exists~ y \in \bd K,~\text{s.t.}~ d(x,y)\leq \varepsilon\}$, where $\bd K$ is the boundary of $K$. Hence,
    \begin{align*}
    \int_{\R^n} |\alpha \kik{K_i}(y)- \alpha \kik{K}(y)|dy
    &\leq |\alpha| V_n(\{x :~ \exists~ y \in \bd K,~\text{s.t.}~ d(x,y)\leq \varepsilon\}) \\
    &\leq |\alpha|  \cdot 2 \varepsilon S(K+B)
    \end{align*}
    for sufficiently large $i$. Here, $S$ denotes the surface area.
    By the continuity of $z$ on $\la$, we obtain
    \[Z(K_i)=z(\alpha \kik{K_i})\rightarrow z(\alpha \kik{K})=ZK\]
    as $i\rightarrow\infty$. Moreover, for each $\phi\in\GL$ and $K\in\cb$, we have
    \begin{align*}
        Z(\phi K) &= z(\alpha \kik{\phi K})=z(\alpha \kik{K}\circ\phi^{-1})\\
        &= \abs{\det\phi}z(\alpha \kik{K})\circ\phi^t=\abs{\det\phi}ZK\circ\phi^t.
    \end{align*}
    Finally, for each $t,x\in\R^n$, we have
    \begin{align*}
        Z(K+t)(x) &= z(\alpha \kik{K+t})(x)=z(\alpha \kik{K}(\cdot- t))(x)\\
        &= \me^{-\inp{t}{x}}z(\alpha \kik{K})(x)=\me^{-\inp{t}{x}}ZK(x).
    \end{align*}
\end{proof}

The following theorem directly follows from the definition of the Laplace transform, Theorem \ref{thm:prop} and Lemma \ref{lem:rel}.
\begin{thm}\label{thm:lapcon}
The Laplace transform on $\cb$ is a continuous, $\GL$ covariant and logarithmic translation covariant valuation.
\end{thm}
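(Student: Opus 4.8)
The plan is to recognize the Laplace transform on $\cb$ as nothing but the restriction to indicator functions of the Laplace transform on $\la$, and then to read off the conclusion from Theorem \ref{thm:prop} and Lemma \ref{lem:rel}. First I would record the elementary identity
\[\lt K(x)=\int_K\me^{-\inp{x}{y}}dy=\int_{\R^n}\kik{K}(y)\me^{-\inp{x}{y}}dy,\qquad x\in\R^n,\ K\in\cb,\]
which says exactly that $\lt K=z(\kik{K})$, where $z:\la\to\conr$ is the map of Theorem \ref{thm:prop} associated with the choice $h(\a)=\a$.

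Next I would verify that $h(\a)=\a$ meets the hypotheses of Theorem \ref{thm:prop}: it is continuous, it satisfies $h(0)=0$ as in \eqref{eqn:zero}, and $\abs{h(\a)}=\abs{\a}$, so \eqref{eqn:growth} holds with $\gamma=1$. Theorem \ref{thm:prop} then gives that $z$ is a continuous, $\GL$ covariant and logarithmic translation covariant valuation on $\la$. Finally I would invoke Lemma \ref{lem:rel} with $\alpha=1$: from the properties of $z$ just obtained, the map $K\mapsto z(\kik{K})=\lt K$ is a continuous, $\GL$ covariant and logarithmic translation covariant valuation on $\cb$, which is the assertion.

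I do not expect any genuine obstacle here, since all the analytic content has already been discharged: the dominated convergence argument for continuity on $\la$, the change-of-variables computations for $\GL$ covariance and logarithmic translation covariance, and the set-theoretic splitting over $E=\{f\le g\}$ underlying the valuation property are all in the proof of Theorem \ref{thm:prop}, while the passage from $L^1$-continuity to Hausdorff-continuity is handled in Lemma \ref{lem:rel} via the bound $\norm{\kik{K_i}-\kik{K}}\le 2\varepsilon\,S(K+B)$ for large $i$. The only thing to be careful about is the harmless bookkeeping of specializing $h(\a)=\a$ and $\alpha=1$ so that $z(\kik{K})$ literally coincides with $\lt K$; after that the theorem is immediate.
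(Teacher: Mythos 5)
Your proposal is correct and is essentially identical to the paper's argument: the paper also deduces Theorem \ref{thm:lapcon} directly from Theorem \ref{thm:prop} (specialized to $h(\a)=\a$) together with Lemma \ref{lem:rel} (with $\alpha=1$), so that $\lt K = z(\kik{K})$ inherits continuity, $\GL$ covariance, logarithmic translation covariance and the valuation property. The only addition in your write-up is the explicit check of \eqref{eqn:zero} and \eqref{eqn:growth} for $h(\a)=\a$, which the paper leaves implicit.
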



\section{Characterizations of Laplace transforms}
In this section, we first characterize the Laplace transform on $\cb$ as a
continuous, $\GL$ covariant and logarithmic translation covariant valuation.
Afterwards, via an approach developed from Tsang's in \cite{Tsa10},
we further characterize the Laplace transform on $\la$.

\subsection{The Laplace transform on convex bodies}\label{sec:con}
We first need to extend the valuation to $\lcb$.
\begin{lem}\label{lem:ext}
If $Z: \cb \to C(\R^n)$ is a continuous, $\GL$ covariant and logarithmic translation covariant valuation, then $\overline{Z} : \lcb \to C(\R^n)$ defined by
\begin{align*}
\overline{Z} K(x) =\begin{cases}
ZK(x), & \dim K = n, \\
0, & \dim K <n
\end{cases}
\end{align*}
is a simple, $\GL$ covariant and logarithmic translation covariant valuation on $\lcb$.
\end{lem}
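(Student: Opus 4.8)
The plan is to verify each property of $\overline{Z}$ separately, with the valuation property being the main point. The simplicity and the covariance properties for lower-dimensional bodies will follow almost immediately from the definition once we observe that $\overline{Z}$ agrees with $Z$ on $n$-dimensional bodies and vanishes otherwise.

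First I would handle the easy properties. Simplicity is immediate: by construction $\overline{Z}K = 0$ whenever $\dim K < n$. For $\GL$ covariance, note that $\phi \in \GL$ preserves dimension, so if $\dim K = n$ then $\dim(\phi K) = n$ and $\overline{Z}(\phi K)(x) = Z(\phi K)(x) = \abs{\det\phi}ZK(\phi^t x) = \abs{\det\phi}\overline{Z}K(\phi^t x)$; if $\dim K < n$, both sides vanish. Logarithmic translation covariance is analogous: translation preserves dimension, so for $\dim K = n$ we use the corresponding property of $Z$, and for $\dim K < n$ both sides are the zero function. (One should also record that $\overline{Z}$ is still continuous on $\lcb$ in the relevant sense, though the statement only asks for simple plus the two covariances; continuity on $n$-dimensional bodies is inherited from $Z$, and one can note $Z$ of a degenerating sequence tends to $0$ using the surface-area estimate as in Lemma~\ref{lem:rel}, but this is not needed for the conclusion stated.)

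The substantive step is the valuation identity: if $K, L, K\cup L, K\cap L \in \lcb$ then
\[
\overline{Z}(K\cup L) + \overline{Z}(K\cap L) = \overline{Z}K + \overline{Z}L.
\]
Since $K \cup L$ is convex, one of $K \subseteq L$ or $L \subseteq K$ holds unless $\dim(K\cup L) > \max(\dim K, \dim L)$ is impossible — more precisely, convexity of $K\cup L$ forces $K \subseteq L$ or $L \subseteq K$ whenever neither has full dimension relative to the other in a way that still keeps the union convex; the clean dichotomy to use is: either (a) one of $K, L$ is contained in the other, or (b) $\dim(K\cap L) \le n-1$. In case (a), say $K \subseteq L$, then $K\cup L = L$ and $K\cap L = K$, so the identity is trivially $\overline{Z}L + \overline{Z}K = \overline{Z}K + \overline{Z}L$. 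In case (b), $\overline{Z}(K\cap L) = 0$ by simplicity, and the remaining terms are handled by splitting on the dimensions of $K$ and $L$: if both $K$ and $L$ are $n$-dimensional, then $K\cup L$ is $n$-dimensional and we apply the valuation property of $Z$ directly (noting $Z(K\cap L)$ must also be accounted for — here one invokes the genuinely delicate point below); if exactly one, say $L$, is $n$-dimensional and $\dim K \le n-1$, then $K \cup L$ being convex with $\dim K \le n-1$ and $K \not\subseteq L$ cannot happen unless $\dim(K\cup L)=n$, and then $V_n((K\cup L)\setminus L)=0$ so we need $Z(K\cup L)=ZL$, which requires continuity; and if both are lower-dimensional, all four terms vanish.

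The hard part will be showing $Z$ itself behaves correctly when a convex body is approximated, i.e., that $ZK$ depends only on $K$ "up to measure zero" in the right sense — concretely, that in case (b) with $K,L$ both $n$-dimensional but $K\cap L$ degenerate, the original valuation identity $Z(K\cup L) + Z(K\cap L) = ZK + ZL$ on $\cb$ already makes sense (all four bodies are in $\cb$) so it applies verbatim, and we only need $Z(K\cap L)$ to be interpretable — but $K\cap L \notin \cb$, so we must instead recover the identity from continuity: approximate $K$ and $L$ by slightly enlarged $n$-dimensional bodies whose intersection is $n$-dimensional, apply the valuation property of $Z$ on $\cb$, and pass to the limit using continuity of $Z$, checking that the $\cup$ and $\cap$ operations are continuous in the Hausdorff metric along the chosen approximating sequences. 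This limiting argument, and verifying that the approximation can be arranged so that all intermediate sets stay in $\cb$ and converge appropriately, is where the real work lies; the rest is bookkeeping on dimensions.
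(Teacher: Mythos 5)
Your overall structure is close to the paper's: reduce the valuation identity to the one nontrivial configuration and handle it by an approximation argument. (Two small remarks on the reduction: your ``dichotomy'' (a)/(b) is not exhaustive --- it omits the case where $K$, $L$, $K\cup L$ and $K\cap L$ are all $n$-dimensional --- but that case is precisely where the valuation property of $Z$ on $\cb$ applies verbatim; and the mixed-dimension case is vacuous, since $\dim K<n$, $\dim L=n$ and $K\cup L$ convex force $K\subseteq L$. The essential case is therefore: $K,L$ both $n$-dimensional, $K\cup L$ convex, $\dim(K\cap L)\le n-1$, which is equivalent to the hyperplane-splitting property $ZM=Z(M\cap H^+)+Z(M\cap H^-)$ for $M=K\cup L$ and $H$ a hyperplane containing $K\cap L$ --- exactly the identity the paper proves.)

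The genuine gap is in your treatment of that case. You propose to enlarge $K$ and $L$ to full-dimensional bodies with full-dimensional intersection, apply the valuation property of $Z$ on $\cb$, and ``pass to the limit using continuity of $Z$''. Continuity handles $Z(K_\varepsilon)\to ZK$ and $Z(L_\varepsilon)\to ZL$, but it says nothing about the remaining term $Z(K_\varepsilon\cap L_\varepsilon)$: the limit body $K\cap L$ is lower-dimensional, hence not in $\cb$, so $Z$ is neither defined nor assumed continuous there, and there is no a priori reason that $Z$ of a degenerating sequence tends to $0$. Proving that this sliver term vanishes is the actual content of the lemma, and it is exactly where the $\GL$ covariance must enter. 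The paper arranges the approximation so that the sliver is controllable: after normalizing $o\in\intr K\cap H$ and $H=e_n^{\perp}$, it writes $K=[K\cap H^+,-se_n]\cup[K\cap H^-,se_n]$ with intersection $[K\cap H,\pm se_n]=\psi_s[K\cap H,\pm e_n]$, where $\psi_s$ scales the $e_n$-coordinate by $s$; then $\GL$ covariance gives
\[
Z[K\cap H,\pm se_n](x)=s\,Z[K\cap H,\pm e_n](x_1e_1+\dots+x_{n-1}e_{n-1}+sx_ne_n),
\]
which tends to $0$ as $s\to0^{+}$ because of the explicit factor $s$ and the continuity of the fixed function $Z[K\cap H,\pm e_n]$. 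A generic enlargement $K_\varepsilon\cap L_\varepsilon$ is not the image of a fixed body under such a linear contraction, so your limiting argument cannot be closed as stated; you need this (or an equivalent) use of covariance to kill the degenerating term.
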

\begin{proof}
The $\GL$ covariance and the logarithmic translation covariance are trivial. It remains to show that
\begin{align}\label{valext}
ZK(x) = Z(K \cap H^+)(x) + Z(K \cap H^-)(x), ~~x \in \R^n
\end{align}
for every hyperplane $H$ (when $n=1$, $H$ is a single point) such that $K, K \cap H^+, K \cap H^- \in \cb$. Since $Z$ is logarithmic translation covariant, we can assume w.l.o.g. that $o \in (\intr K \cap H)$. We can further assume that $e_n \perp H$ and $e_n \in H^+$ due to the $\GL$ covariace of $Z$.
For a fixed $K$, note that $\pm se_n \in K$ for sufficiently small $s>0$. Hence the valuation property of $Z$ shows that
\begin{align}\label{val4}
ZK(x) + Z[K \cap H, \pm se_n](x) = Z[K \cap H^+, -se_n](x) + Z[K \cap H^-, se_n](x)
\end{align}
for every $x \in \R^n$ and sufficiently small $s>0$. The $\GL$ covariance of $Z$ gives that
\begin{align*}
Z[K \cap H, \pm se_n](x) = s Z[K \cap H, \pm e_n](x_1e_1 + \dots + x_{n-1}e_{n-1} + sx_ne_n),
\end{align*}
where $x=x_1e_1 + \dots + x_{n}e_{n} \in \R^n$.
Since
\begin{align*}
&\lim\limits_{s \to 0^+} Z[K \cap H, \pm e_n](x_1e_1 + \dots + x_{n-1}e_{n-1} + sx_ne_n) \\
&= Z[K \cap H, \pm e_n](x_1e_1 + \dots + x_{n-1}e_{n-1}),
\end{align*}
we have
\begin{align}\label{lim}
\lim\limits_{s \to 0^+} Z([K \cap H, \pm se_n])(x) \to 0.
\end{align}
Now combing (\ref{val4}) and (\ref{lim}) with the continuity of $Z$, we get that (\ref{valext}) holds true.
\end{proof}

Next we consider $ZC^n$.
\begin{lem}\label{lem:cube}
If $Z: \lcb \to C(\R^n)$ is a simple, $\GL$ covariant and logarithmic translation covariant valuation, then there exists a constant $c \in \R$ such that
\begin{align}
ZC^n(re_1)=c \mathcal{L}C^n(re_1) = c \int_{C^n} \me^{-re_1 \cdot y} dy, \label{cube}
\end{align}
for every $r \in \R$.
\end{lem}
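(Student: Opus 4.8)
The plan is to exploit the one-dimensional nature of the problem after restricting to the line $\R e_1$. Since $Z$ is logarithmic translation covariant and we want to compute $ZC^n$ on the axis $re_1$, translating $C^n$ in the $e_1$-direction only rescales $ZC^n(re_1)$ by $\me^{-r t}$; translating in any direction $e_j$ with $j\geq 2$ leaves $ZC^n(re_1)$ unchanged. Likewise, a diagonal map $\phi=\mathrm{diag}(1,\lambda_2,\dots,\lambda_n)$ fixes the direction $e_1$ under $\phi^t$, multiplies the determinant by $\lambda_2\cdots\lambda_n$, and sends $C^n$ to a box. So the restriction $r\mapsto ZK(re_1)$ behaves, under these special affine maps and under cutting by hyperplanes $H$ with $e_n\perp H$ or more generally $e_j\perp H$ for $j\geq2$, exactly like a simple, translation-covariant valuation on the interval $[C^n]$ in the first coordinate, tensored with a linear functional in the remaining coordinates.

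The key steps, in order. First, I would use the hyperplane $x_j=$ const (perpendicular to $e_j$, $2\leq j\leq n$) together with simplicity and the valuation property to write $ZC^n(re_1)$ as an ``integral over the $j$-th coordinate'' — that is, decomposing $C^n$ into thin slabs perpendicular to $e_j$ and applying the inclusion–exclusion of the valuation, which for $x=re_1$ (so $x_j=0$) contributes no exponential factor in the $e_j$-direction. Iterating over $j=2,\dots,n$ reduces the problem to understanding $Z$ on boxes of the form $[o,e_1]\times(\text{small box})$, and by $\GL$ covariance (scaling the last $n-1$ coordinates) the value on such a box is proportional to the product of the side lengths in directions $e_2,\dots,e_n$ times a single function $g(r):=Z([o,e_1]^{(1)}\times\{0\}^{n-1}\text{-fiber})(re_1)$, where the normalization is set by one box. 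Second, I would show $g$ is forced: cutting $[o,e_1]$ in the $e_1$-direction at a point $a\in(0,1)$ and using translation covariance in the $e_1$-direction gives the Cauchy-type functional equation $g(r)=g_{[0,a]}(r)+\me^{-ra}g_{[0,1-a]}(r)$ together with the scaling relation coming from $\GL$ maps fixing $e_1$; combined with continuity (hence the continuity of $r\mapsto ZC^n(re_1)$ inherited from continuity of $Z$ in the Hausdorff metric) this pins down $g(r)=c\int_0^1\me^{-rt}\,dt$ up to the constant $c$. Third, assembling the slab decompositions from the first step, $ZC^n(re_1)=c\int_0^1\cdots\int_0^1\me^{-re_1\cdot y}\,dy_1\cdots dy_n=c\,\mathcal{L}C^n(re_1)$.

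Concretely, the cleanest route is probably the functional-equation one: define $\varphi(r)=ZC^n(re_1)$, and for $a\in[0,1]$ let $K_a$ be the box $[o,ae_1]+\sum_{2\leq i\leq n}[o,e_i]$. Simplicity and the valuation property applied to the hyperplane $\{x_1=a\}$ give $ZC^n(re_1)=Z(K_a')(re_1)+Z(K_a'')(re_1)$ where $K_a',K_a''$ are the two pieces; translation covariance in $e_1$ rewrites $Z(K_a'')(re_1)=\me^{-ra}Z(K_{1-a})(re_1)$, and $\GL$ covariance (the map scaling only the $e_1$-axis by $a$, determinant $a$) gives $Z(K_a)(re_1)=a\,Z(C^n)(ar e_1)=a\,\varphi(ar)$ — wait, one must be careful that scaling the $e_1$-axis also scales the direction $e_1$ under $\phi^t$, which it does by $a$, so indeed $Z(K_a)(re_1)=a\,\varphi(ar)$. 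Hence $\varphi(r)=a\,\varphi(ar)+\me^{-ra}(1-a)\,\varphi((1-a)r)$ for all $a\in[0,1]$ and $r\in\R$. This is the defining functional identity of $r\mapsto\int_0^1\me^{-rt}dt$ (indeed $\psi(r)=\int_0^1\me^{-rt}dt$ satisfies it, and the substitution reduces a general continuous solution to a constant multiple of $\psi$). I expect the main obstacle to be this last uniqueness claim: proving that continuity plus the family of identities indexed by $a$ forces $\varphi=c\psi$. The way I would handle it is to first restrict to $r$ in a neighborhood of $0$ where $\varphi(0)=ZC^n(0)$ might a priori be any real number, note $\psi(0)=1$, set $c=\varphi(0)$, consider the difference $d(r)=\varphi(r)-c\psi(r)$, which satisfies the same homogeneous-looking identity $d(r)=a\,d(ar)+\me^{-ra}(1-a)\,d((1-a)r)$ with $d(0)=0$, and then use an extremal/maximum argument on a compact interval $[-R,R]$: if $M=\max_{|r|\leq R}|d(r)|$ is attained at some $r_0$, choosing $a$ appropriately (e.g. $a=1/2$) and using $|a\,d(ar_0)+\me^{-ar_0}(1-a)d((1-a)r_0)|<M$ unless $d$ is identically zero on $[-R,R]$, since the weights $a$ and $\me^{-ar_0}(1-a)$ sum to strictly less than $1$ when $r_0>0$ and the symmetric argument handles $r_0<0$, forces $M=0$. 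Then $\varphi=c\psi$ everywhere, which after reinstating the $n-1$ trivial extra coordinate integrals is exactly \eqref{cube}.
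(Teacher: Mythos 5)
Your derivation of the functional identity is correct and uses exactly the ingredients of the paper's proof: cutting $C^n$ by $\{x_1=a\}$, simplicity, logarithmic translation covariance along $e_1$, and the map $\psi_a$ scaling the $e_1$-axis give
$\varphi(r)=a\varphi(ar)+\me^{-ar}(1-a)\varphi((1-a)r)$ for $\varphi(r)=ZC^n(re_1)$, $0<a<1$. Where you diverge from the paper is in how this relation is solved: the paper subdivides the stretched box $\psi_{q/p}C^n$ into $q$ translates of $\psi_{1/p}C^n$, sums the resulting geometric series to pin down $\varphi$ at all rational $r=q/p$ (and separately at $-q/p$ via $-e_1$), and finishes by continuity of the function $ZC^n$; you instead reduce to a uniqueness statement for continuous solutions of the functional equation with prescribed value at $0$. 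That reduction is legitimate: $\psi(r)=\int_0^1\me^{-rt}dt=\mathcal{L}C^n(re_1)$ does satisfy the identity, the identity is linear so $d=\varphi-\varphi(0)\psi$ satisfies it with $d(0)=0$, and the continuity you need is simply that $ZC^n\in C(\R^n)$ (continuity of $Z$ in the Hausdorff metric is neither assumed in the lemma nor needed).

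The genuine gap is the uniqueness step for $r<0$. With $a=1/2$ the identity reads $d(r)=\tfrac{1}{2}\bigl(1+\me^{-r/2}\bigr)d(r/2)$; for a maximizer $r_0>0$ your argument works because the weights $a$ and $\me^{-ar_0}(1-a)$ sum to less than $1$, but for $r_0<0$ they sum to more than $1$, so the asserted ``symmetric argument'' is not the mirror image of the positive case and, as stated, produces no contradiction at a negative maximizer. The step can be repaired in at least two ways. One is to iterate towards the origin: $d(r)=\bigl(\prod_{k=1}^{m}\tfrac{1}{2}(1+\me^{-r/2^k})\bigr)d(r/2^m)$, where the product is bounded by $\me^{\abs{r}}$ while $d(r/2^m)\to d(0)=0$, so $d\equiv0$ for every $r$, of either sign. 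The other is to set $\tilde d(r)=\me^{r}d(r)$, which satisfies $\tilde d(r)=a\me^{(1-a)r}\tilde d(ar)+(1-a)\tilde d((1-a)r)$ with weight sum strictly less than $1$ for $r<0$, so your maximum argument applies verbatim; note that multiplying by $\me^{r}$ amounts to replacing $C^n$ by $C^n-e_1$, which is precisely the paper's device of running the argument with $-e_1$ for negative $r$. With either repair your proof is complete and constitutes a correct, mildly different route to the lemma.
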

\begin{proof}
First note that
\begin{align}\label{cube3}
\int_{C^n} \me^{-re_1 \cdot y} dy = \frac{1}{r}(1-\me^{-r}).
\end{align}
For $s>0$, let $\psi_s \in\GL$ such that $\psi_s e_1=se_1$ and $\psi_s e_k=e_k$ for $2\leq k\leq n$. For integers $p,q>0$, since $Z$ is simple, we have
\[Z(\psi_{q/p}C^n) (e_1)=\sum_{j=0}^{q-1} Z \left(\psi_{1/p}C^n+\frac{je_1}{p}\right) (e_1).\]
Also since $Z$ is $\GL$ and logarithmic translation covariant, we have
\begin{align*}
\frac{q}{p} ZC^n\left(\frac{q}{p}e_1\right) &= \frac{1}{p}\sum_{j=0}^{q-1}\me^{-j/p}ZC^n\left(\frac{e_1}{p}\right)\\
&= \frac{1}{p}ZC^n\left(\frac{e_1}{p}\right)\frac{1-\me^{-q/p}}{1-\me^{-1/p}}.
\end{align*}
In particular, if $q=p$, we have
\[Z C^n\left(\frac{e_1}{p}\right)=\frac{p(1-\me^{-1/p})}{1-\me^{-1}}Z C^n(e_1).\]
Combining the two formulas above with (\ref{cube3}), and letting $c=\frac{ZC^n(e_1)}{1-\me^{-1}}$, (\ref{cube}) holds for $r = q/p$. Now since $Z C^n$ is a continuous function on $\R^n$, (\ref{cube}) holds for $r \geq 0$.

For $r<0$. Repeating the same process for $-e_1$, we obtain
\begin{align*}
\frac{q}{p} ZC^n\left(-\frac{q}{p}e_1\right) &= \frac{1}{p}\sum_{j=0}^{q-1}\me^{j/p}ZC^n\left(-\frac{e_1}{p}\right)\\
&= \frac{1}{p}ZC^n\left(-\frac{e_1}{p}\right)\frac{1-\me^{q/p}}{1-\me^{1/p}}
\end{align*}
and
\[Z C^n\left(-\frac{e_1}{p}\right)=\frac{p(1-\me^{1/p})}{1-e}Z C^n(-e_1).\]
Combining the two formulas above and letting $c'=-\frac{ZC^n(-e_1)}{1-e}$, we have
$$ZC^n(re_1) = c'\int_{C^n} \me^{-re_1 \cdot y} dy$$
for $r=-q/p$. The continuity of the function $ZC^n$ gives that $c=c'$ and thus (\ref{cube}) holds for $r \leq 0$.
\end{proof}

Now we consider valuations on polytopes.
Let $\lpoly$ denote the set of polytopes in $\R^n$ and let $Z: \lpoly \to C(\R^n)$ be a valuation.
The inclusion-exclusion principle states that $Z$ extends uniquely to $U(\mathcal {P})$, the set of finite unions of polytopes, with
$$Z(P_1 \cup \dots \cup P_m) = \sum_{1\leq j \leq m} (-1)^{j-1} \sum_{1 \leq i_1 < \dots <i_j \leq m} Z(P_{i_1} \cap \dots \cap P_{i_j})$$
for every $P_1, \dots, P_m \in \lpoly$ (see \cite{LR06} or \cite[Theorem 6.2.1 and Theorem 6.2.3]{Sch14}).

\begin{lem}\label{lem:inex}
If $Z: \lpoly \to C(\R^n)$ is a simple, $\GL$ covariant and logarithmic translation covariant valuation such that
\begin{align}\label{cube2}
ZC^n(re_1)=0
\end{align}
for every $r \in \R$, then
$$ZT^n(re_1)=0$$
for every $r \in \R$.
\end{lem}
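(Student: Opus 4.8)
Since $C^1=T^1=[o,e_1]$, the case $n=1$ is trivial, so assume $n\ge 2$ and write $g(r):=ZT^n(re_1)$; as $ZT^n\in C(\R^n)$ the function $g$ is continuous, and the goal is to show $g\equiv 0$. My plan is to produce a single functional equation for $g$ by comparing the $n$-fold dilate $nT^n$ with $C^n$ — which is possible because $C^n\subseteq nT^n$ (every $y\in[0,1]^n$ has $\sum_i y_i\le n$) — and then to iterate it.

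The key step is a dissection identity. Write $nT^n=C^n\cup A_1\cup\cdots\cup A_n$, where $A_i:=nT^n\cap\{y:y_i\ge 1\}$; this is a genuine covering, since a point of $nT^n$ either has all coordinates $\le 1$ (hence lies in $C^n$) or has some $y_i\ge 1$. For $\emptyset\ne J\subseteq\{1,\dots,n\}$, the substitution $y_i\mapsto y_i-1$ for $i\in J$ identifies $\bigcap_{i\in J}A_i$ with $\mathbbm{1}_J+(n-|J|)T^n$, where $\mathbbm{1}_J:=\sum_{i\in J}e_i$; this is $n$-dimensional when $|J|<n$ and is the single point $\{\mathbbm{1}_{\{1,\dots,n\}}\}$ when $|J|=n$. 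Moreover $C^n\cap A_i\subseteq\{y_i=1\}$, so every intersection involving $C^n$ and at least one $A_i$ lies in a hyperplane and, $Z$ being simple, contributes $0$. Feeding $nT^n=C^n\cup A_1\cup\cdots\cup A_n$ into the inclusion–exclusion principle for $Z$ and discarding the vanishing terms yields $Z(nT^n)=Z(C^n)+\sum_{j=1}^{n-1}(-1)^{j-1}\sum_{|J|=j}Z(\mathbbm{1}_J+(n-j)T^n)$ as functions on $\R^n$. Evaluating at $x=re_1$: by $\GL$ covariance of the positive-determinant map $nI$ one has $Z(nT^n)(re_1)=n^nZT^n(nre_1)=n^ng(nr)$, and $Z(\mathbbm{1}_J+(n-j)T^n)(re_1)=\me^{-(\mathbbm{1}_J\cdot e_1)r}(n-j)^ng((n-j)r)$ by log-translation and $\GL$ covariance; since $\mathbbm{1}_J\cdot e_1$ is $1$ or $0$ according as $1\in J$ or not, summing over the $\binom{n-1}{j-1}$ such sets containing $1$ and the $\binom{n-1}{j}$ not containing $1$, and using the hypothesis $Z(C^n)(re_1)=0$, gives
\[
n^n\,g(nr)=\sum_{j=1}^{n-1}(-1)^{j-1}\Bigl(\tbinom{n-1}{j-1}\me^{-r}+\tbinom{n-1}{j}\Bigr)(n-j)^n\,g\bigl((n-j)r\bigr).
\]

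From here: setting $r=0$ and using $\binom{n-1}{j-1}+\binom{n-1}{j}=\binom nj$ gives $n^ng(0)=\bigl(\sum_{j=1}^{n-1}(-1)^{j-1}\binom nj(n-j)^n\bigr)g(0)$, and the bracket equals $n^n-n!$ (the standard evaluation of the alternating sum counting surjections of an $n$-element set), so $n!\,g(0)=0$ and $g(0)=0$. Replacing $r$ by $r/n$ in the displayed identity writes $g(r)$ as an explicit linear combination of the values $g(\lambda r)$ with $\lambda=\tfrac{n-j}{n}\in\bigl(0,\tfrac{n-1}{n}\bigr]$, i.e.\ at strictly smaller scales. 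With $\mu(t):=\sup_{|r|\le t}|g(r)|$ — finite and nondecreasing by continuity, with $\mu(0^+)=|g(0)|=0$ — this produces an estimate $\mu(t)\le c_n(t)\,\mu\bigl(\tfrac{n-1}{n}t\bigr)$, and iterating it while sending the argument to $0$ forces $\mu\equiv 0$, hence $g\equiv 0$. For small $n$ the last estimate is immediate — e.g.\ for $n=2$ the identity reads $4g(2r)=(1+\me^{-r})g(r)$, with factor $c_2(t)=(1+\me^{-t})/4\to\tfrac12$.

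The point that needs care — and what I expect to be the main obstacle — is the iteration for general $n$: the coefficients above alternate in sign, and the sum of their absolute values at $r=0$, namely $n^{-n}\sum_{j=1}^{n-1}\binom nj(n-j)^n$, exceeds $1$ once $n\ge 4$, so the single identity is not by itself a contraction. One must upgrade it to one — e.g.\ by combining it with the analogous identities from the dilates $kT^n$ (the covering $kT^n=C^n\cup\bigcup_i(kT^n\cap\{y_i\ge1\})$ is available for every $k\ge n$), or by passing to a suitably weighted supremum norm — so that the resulting recursion for $\mu$ has total weight $<1$. The geometry of the dissection and the covariance bookkeeping are otherwise routine.
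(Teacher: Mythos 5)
Your dissection of $nT^n$ into $C^n\cup A_1\cup\dots\cup A_n$, the identification $\bigcap_{i\in J}A_i=\mathbbm{1}_J+(n-\abs{J})T^n$, the use of simplicity to kill the terms meeting $C^n$ in a hyperplane, and the resulting identity
\begin{equation*}
n^n g(nr)=\sum_{j=1}^{n-1}(-1)^{j-1}\Bigl(\tbinom{n-1}{j-1}\me^{-r}+\tbinom{n-1}{j}\Bigr)(n-j)^n g\bigl((n-j)r\bigr)
\end{equation*}
are all correct, and your derivation of $g(0)=0$ from it via the surjection count $\sum_{j=0}^{n}(-1)^j\binom{n}{j}(n-j)^n=n!$ is a nice alternative to the paper's dissection of $C^n$ into $n!$ positive-determinant $\GL$ images of $T^n$. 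The problem is the concluding step, and you have identified it yourself: for $n\geq 3$ (already at $r\leq 0$, and for all $r$ once $n\geq4$) the sum of the absolute values of the coefficients in this single self-similar identity exceeds $1$, so iterating it multiplies the total coefficient mass geometrically, while the only decay you have at the shrinking arguments is plain continuity of $g$ at $0$ with $g(0)=0$ --- no modulus of continuity is available that could absorb a geometric blow-up. The proposed repairs (combining the identities from the dilates $kT^n$, $k\geq n$, or a weighted supremum norm) are only gestured at, not carried out, and it is exactly this step that is the substance of the lemma; as it stands your argument proves the statement only for $n=2$ (where the single factor stays bounded and the argument halves at each step).

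The paper resolves this difficulty by avoiding iteration altogether. It decomposes $mT^n$, for arbitrary $m\geq n$, along \emph{all} integer translates $C^n+k_1e_1+\dots+k_ne_n$ of the unit cube, obtaining pieces $M_{m-k}^n+k_1e_1+\dots+k_ne_n$ with $M_k^n=kT^n\cap C^n$; expressing $Z(M_k^n)(se_1)$ for $1\leq k\leq n-1$ through the same combinatorics you used, and then setting $s=r/m$, it arrives at a one-shot identity of the form $g(r)=\sum_{j=1}^{n-1}(j/m)^n b_j(m,r/m)\,g(jr/m)$ valid for every $m\geq n$. Here the evaluation points $jr/m$ tend to $0$ as $m\to\infty$, so $g(jr/m)\to g(0)=0$ by continuity, and one only needs the coefficients $(j/m)^n b_j(m,r/m)$ to stay bounded (the paper in fact shows they tend to $0$, using $\sum_{k_1=0}^{k}\me^{-k_1 r/m}=\frac{1-\me^{-(k+1)r/m}}{1-\me^{-r/m}}$ and $(1/m)^2\abs{1-\me^{-r/m}}^{-1}\to0$). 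Because each scale $r$ is handled by a single application of the identity with $m$ large, rather than by iterating a fixed contraction, no bound of the form ``total weight $<1$'' is ever needed. If you want to complete your argument along your own lines, this is the missing ingredient: replace the fixed dilate $n$ by the family of dilates $m\to\infty$ and trade the contraction requirement for mere boundedness of the coefficients.
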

\begin{proof}
The case $n = 1$ is trivial. We only consider $n \geq 2$.

First we prove that $ZT^n(o)=0$. Since $C^n = \bigcup_{1\leq i_1<\dots <i_n \leq n} \{x \in \R^n: 0 \leq x_{i_1} \leq \dots \leq x_{i_n} \leq 1 \}$, and all the sets $\{x \in \R^n: 0 \leq x_{i_1} \leq \dots \leq x_{i_n} \leq 1 \}$ are $\GL$ transform  (with positive determinant) images of $T^n$, the valuation property, simplicity, and $\GL$ covariance of $Z$ combined with \eqref{cube2} give that $$ZT^n(o) =0.$$

Next we deal with the case $r \neq 0$. Let $g(m,s) = Z(mT^n)(se_1)$ for $s \in \R$ and integer $m \geq 0$.
For integer $k \geq 1$, denote $M_k^n = kT^n \cap C^n$. Note that when $k \geq n$, we have
\begin{align}\label{k>=n}
M_k^n = C^n.
\end{align}
For $1 \leq k \leq n-1$,
\begin{align}\label{union}
kT^n \cup C^n = \left(\bigcup_{j=1}^{n}\left( kT^n \cap \{x \in \R^n: x_j \geq 1\}\right)\right) \cup C^n
\end{align}
Denote $T_j = kT^n \cap \{x \in \R^n: x_j \geq 1\}$. We have
\begin{align*}
T_j = (k-1)T^n + e_j,
\end{align*}
and
\begin{align*}
T_{j_1} \cap \dots \cap T_{j_i} = (k-i)T^n + e_{j_1} + \dots + e_{j_i}
\end{align*}
for $i \leq k$ and $1 \leq j_1 < \dots <j_i \leq n$.
Hence, the valuation property (after extension), inclusion-exclusion principle, simplicity and logarithmic translation covariance of $Z$, combined with (\ref{cube2}) and (\ref{union}), give that
\begin{align}\label{k<n}
Z(M_k^n)(se_1)
&= Z(kT^n)(se_1) - Z(kT^n \cup C^n)(se_1) \nonumber\\
&= Z(kT^n)(se_1) - \left(\sum_{i=1}^{k-1} (-1)^{i-1} \sum_{1 \leq j_1 < \dots < j_i \leq n} Z(T_{j_1} \cap \dots \cap T_{j_i})(se_1) \right)                  \nonumber\\
&= Z(kT^n)(se_1) - \left( \sum_{i=1}^{k-1} (-1)^{i-1} \left( \binom{n-1}{i-1} e^{-s} + \binom{n-1}{i} \right) Z((k-i)T^n)(se_1) \right) \nonumber \\
&= \sum_{i=0}^{k-1} (-1)^{i} a_i(s) g(k-i,s),
\end{align}
where $1 \leq k \leq n-1$, $a_i(s) = \binom{n-1}{i-1} e^{-s} + \binom{n-1}{i}$ for $1 \leq i \leq n-1$, and $a_0(s) = 1$.

For non negative integers $k_1, \dots , k_n$ satisfying $k = k_1 + \dots + k_n \leq m-1$, we have
$$mT^n \cap (C^n + k_1e_1 + \dots + k_ne_n)  =  M_{m-k}^n + k_1e_1 + \dots + k_ne_n.$$
For $m \geq n$, applying the valuation property, simplicity, logarithmic translation covariance of $Z$, (\ref{cube2}), (\ref{k>=n}) and (\ref{k<n}), we have
\begin{align}\label{eqn:gef}
g(m,s) &= Z(mT^n)(se_1) \nonumber \\
&=\sum_{k=0}^{m-1} \sum_{k_1 + \dots +k_n= k, \atop  k_1, \dots, k_n \geq 0} Z(mT^n \cap (C^n + k_1e_1 + \dots + k_ne_n))(se_1) \nonumber \\
&= \sum_{k=0}^{m-1} \sum_{k_1 + \dots +k_n= k, \atop  k_1, \dots, k_n \geq 0} Z(M_{m-k}^n + k_1e_1 + \dots + k_ne_n)(se_1) \nonumber \\
&= \sum_{k=0}^{m-1} \sum_{k_1 =0}^k e^{-k_1 s} \sum_{k_2 + \dots + k_n= k-k_1, \atop  k_2, \dots, k_n \geq 0} Z(M_{m-k}^n)(se_1) \nonumber\\
&= \sum_{k=0}^{m-1} Z(M_{m-k}^n)(se_1) \sum_{k_1 =0}^k e^{-k_1 s} \binom{k-k_1 + n-2}{n-2} \nonumber\\
&= \sum_{k=m-n+1}^{m-1} \left(\sum_{i=0}^{m-k-1} (-1)^i a_i(s) g(m-k-i,s) \right) \left( \sum_{k_1 =0}^k e^{-k_1 s} \binom{k-k_1 + n-2}{n-2}\right) \nonumber \\
&= \sum_{j=1}^{n-1} b_j (m,s) g(j,s),
\end{align}
where $b_j (m,s) = \sum\limits_{k=m-n+1}^{m-j} (-1)^{m-k-j} a_{m-k-j}(s) \sum\limits_{k_1 =0}^k e^{-k_1 s} \binom{k-k_1 + n-2}{n-2}$ for $1 \leq j \leq n-1$.

Since $Z$ is $\GL$ covariant, we have
\begin{align*}
g(m,s) = m^n g(1,ms).
\end{align*}
Hence the equation (\ref{eqn:gef}) gives that
\begin{align}\label{eqn:gef2}
g(1,ms) = \sum_{j=1}^{n-1} (j/m)^n b_j (m,s) g(1,js).
\end{align}

For any fixed $r \neq 0$, taking $s=r/m$ in (\ref{eqn:gef2}), we get
\begin{align}
g(1,r) = \sum_{j=1}^{n-1} (j/m)^n b_j (m,r/m) g(1,jr/m).
\end{align}
Since $g(1,\cdot)$ is a continuous function and $g(1,0) = ZT^n(o) = 0$, if we can show that $(j/m)^n b_j (m,r/m)$ is finite when $m \to \infty$, then $g(1,r) = 0$ which gives the desired result for $r \neq 0$.

Indeed, for sufficiently large $m$, $(m+n)/m \leq 2$ and $a_i(r/m)$, $1 \leq i \leq n-1$ are smaller than a constant $N>0$. Hence, for $1\leq j \leq n-1$,
\begin{align*}
|(j/m)^n b_j (m,r/m)|
&\leq (n/m)^n \sum_{k=m-n+1}^{m-j} a_{m-k-j}(r/m) \sum_{k_1 =0}^k e^{-k_1 r/m} \binom{k-k_1 + n-2}{n-2} \\
&\leq (n/m)^n \sum_{k=m-n+1}^{m-j} a_{m-k-j}(r/m) \sum_{k_1 =0}^k e^{-k_1 r/m} (m+n)^{n-2} \\
&=(m+n)^{n-2} (n/m)^n \sum_{k=m-n+1}^{m-j} a_{m-k-j}(r/m)\frac{|1-e^{-(k+1)r/m|}}{|1-e^{-r/m}|} \\
&\leq 2^{n-2}n^n N (1/m)^2 \sum_{k=m-n+1}^{m-j}\frac{|1-e^{-(k+1)r/m}|}{|1-e^{-r/m}|} \\
&\leq 2^{n-2}n^n N (1/m)^2 \frac{(n-j)\max\{1,e^{-r}\}}{|1-e^{-r/m}|}.
\end{align*}
Note that $(1/m)^2 \frac{1}{|1-e^{-r/m}|} \to 0$ when $m \to \infty$. Hence $(j/m)^n b_j (m,r/m) \to 0$ when $m \to \infty$.
\end{proof}

For $0 < \lambda < 1$, let $H_\lambda$ be the hyperplane through the origin with normal vector $(1-\lambda) e_1- \lambda e_2$. Since $Z: \lpoly \to C(\R^n)$ is a simple valuation,
\begin{align}\label{val3}
ZT^{n} (x) = Z (T^{n} \cap H_\lambda ^-) (x) + Z (T^{n} \cap H_\lambda ^+) (x), ~~x \in \R^n.
\end{align}
We define $\phi_1,\phi_2 \in \GL$ by
$$\phi_1 e_1 = \lambda e_1 + (1-\lambda) e_2,~\phi_1 e_2 =  e_2,~\phi_1 e_i = e_i,~\text{for}~3 \leq i \leq n,$$
and
$$\phi_2 e_1 = e_1,~\phi_2 e_2 = \lambda e_1 + (1-\lambda) e_2,
\phi_2 e_i =  e_i,~\text{for}~3 \leq i \leq n.$$
Note that $T^{n}\cap H_\lambda ^- = \phi_1 T^{n}$, $T^{n}\cap H_\lambda ^+ = \phi_2 T^{n}$. The $\GL$ covariance of $Z$ and valuation relation (\ref{val3}) show that
\begin{align*}
ZT^{n} (x) = \lambda ZT^{n} (\phi_1 ^t x) + (1-\lambda) ZT^{n} (\phi_2 ^t x)
\end{align*}
Let $f(\cdot)=ZT^n(\cdot)$. We have
\begin{align}\label{30}
f(x) = \lambda f(\phi_1 ^t x) + (1-\lambda) f(\phi_2 ^t x)
\end{align}
for every $0< \lambda <1$ and $x = (x_1,\dots,x_n)^t \in \R^n$, where $\phi_1 ^t x = (\lambda x_1 + (1-\lambda)x_2, x_2, x_3,\dots, x_n)^t$ and $\phi_2 ^t x = (x_1, \lambda x_1 + (1-\lambda)x_2,x_3,\dots,x_n)^t$.

\begin{lem}\label{lem:sim}
Let $n \geq 2$ and let the function $f \in C(\R^n)$ satisfy the following properties. \\
(\rmnum{1}) $f$ satisfies the functional equation (\ref{30}); \\
(\rmnum{2}) For every even permutation $\pi$ and $x \in \R^n$,
$$f(x)=f(\pi x).$$
If $f(r e_1) =0$ for every $r \in \R$, then
$$f(x)=0$$
for every $x \in \R^n$.
\end{lem}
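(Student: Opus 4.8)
The plan is to turn hypothesis (i) into a one–dimensional additivity relation, extract from it an integral formula expressing $f$ through its values on the diagonal $\{x_1=x_2\}$, upgrade (ii) to invariance of $f$ under all coordinate permutations, and then induct on $n$.

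\textbf{Step 1 (an integral formula).} Fix $x'=(x_3,\dots,x_n)$ and write $F(u,v):=f(u,v,x')$. With $w=\lambda u+(1-\lambda)v$ one has $w-v=\lambda(u-v)$ and $u-w=(1-\lambda)(u-v)$, so multiplying (\ref{30}) by $u-v$ turns it, for $G(u,v):=(u-v)F(u,v)$, into the additivity relation
\[
G(u,v)=G(u,w)+G(w,v)\qquad\text{whenever }w\text{ lies strictly between }u\text{ and }v.
\]
Fixing $v_0$ and putting $g(t):=G(t,v_0)$, one checks that for $t$ near $t_0$ the additivity relation yields $g(t)-g(t_0)=(t-t_0)F$ with $F$ evaluated at $(t,t_0)$ or at $(t_0,t)$; letting $t\to t_0$ shows that $g$ is differentiable with $g'(t_0)=f(t_0,t_0,x')$. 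Since this is continuous, $g\in C^1$ and $g(t)=\int_{v_0}^{t}f(s,s,x')\,ds$, whence
\[
f(u,v,x')=\frac{1}{u-v}\int_{v}^{u}f(s,s,x')\,ds\qquad(u\neq v),
\]
for every $x'$; call this $(\star)$. Here only continuity of $f$ is used.

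\textbf{Step 2 (full symmetry).} The right–hand side of $(\star)$ is unchanged under $u\leftrightarrow v$, so $f$ is symmetric in its first two coordinates. Since $A_n$ together with a single transposition generates $S_n$, hypothesis (ii) now gives invariance of $f$ under all permutations of the coordinates. Consequently the argument of Step 1 applies verbatim to any pair of coordinates, so (\ref{30}) and the analogue of $(\star)$ hold for every coordinate pair.

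\textbf{Step 3 (induction on $n$).} For $n=2$, taking $v=0$ in $(\star)$ gives $0=f(re_1)=\frac1r\int_0^r f(s,s)\,ds$ for every $r\neq0$, so $s\mapsto f(s,s)$ has vanishing integral over every $[0,r]$ and hence is identically $0$; then $(\star)$ forces $f\equiv0$. Let $n\geq3$ and assume the statement in dimension $n-1$. First restrict $f$ to the coordinate plane $P=\{x_3=\dots=x_n=0\}$: since $\phi_1^t$ and $\phi_2^t$ preserve $P$, the restriction $\bar f:=f|_P$ satisfies (i) as a function of $(x_1,x_2)$, satisfies (ii) vacuously in dimension two, and has $\bar f(r,0)=f(re_1)=0$; by the case $n=2$, $f$ vanishes on $P$. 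Next, for each $c\in\R$ put $g_c(y_1,\dots,y_{n-1}):=f(c,y_1,\dots,y_{n-1})$. Then $g_c$ satisfies (i) (because, by Step 2, $f$ satisfies the analogue of (\ref{30}) in the coordinate pair $(2,3)$), it satisfies (ii) (as $f$ is symmetric in $x_2,\dots,x_n$), and $g_c(re_1)=f(c,r,0,\dots,0)=0$ since $(c,r,0,\dots,0)\in P$. By the inductive hypothesis $g_c\equiv0$ for every $c$, i.e. $f\equiv0$ on $\R^n$.

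\textbf{Main obstacle.} The only genuinely delicate step is Step 1: deducing the clean integral representation $(\star)$ from the averaging identity (\ref{30}), that is, showing that the merely continuous ``antiderivative'' $g(t)=G(t,v_0)$ is automatically $C^1$ with derivative $f(t,t,x')$. This is exactly where continuity of $f$ (and not any assumed smoothness) enters. Once $(\star)$ is available, Steps 2 and 3 are bookkeeping with permutation symmetry and restriction to coordinate subspaces.
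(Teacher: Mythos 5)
Your proof is correct, and it takes a genuinely different route from the paper's. The paper argues by induction on the number $m$ of nonzero coordinates of $x$: after using (\rmnum{2}) to move the nonzero entries to the front, it substitutes three carefully chosen pairs $(x,\lambda)$ into (\ref{30}) (according to the signs and relative sizes of $x_1,x_2$) so as to express $f$ at a point with $m$ nonzero coordinates through its values at points with $m-1$ nonzero coordinates, closing the degenerate cases by continuity. You instead observe that multiplying (\ref{30}) by $x_1-x_2$ converts it into the interval-additivity relation $G(u,v)=G(u,w)+G(w,v)$ for $G(u,v)=(u-v)f(u,v,x')$, which yields the mean-value representation $f(u,v,x')=\frac{1}{u-v}\int_v^u f(s,s,x')\,ds$; this reduces everything to the diagonal and turns the conclusion into a clean induction on the dimension via restriction to coordinate subspaces. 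Your approach buys two things: the symmetry of $f$ in $(x_1,x_2)$ falls out of the functional equation itself, so combined with (\rmnum{2}) you get full $S_n$-invariance — in particular your argument handles $n=2$ cleanly, where (\rmnum{2}) is vacuous and the paper's reduction ``assume the first $m$ coordinates are nonzero'' is not available for points on the $x_2$-axis — and the formula $(\star)$ is a stronger structural statement than what the paper extracts. The one spot needing care, the differentiability of $g(t)=G(t,v_0)$ at $t=v_0$, can be bypassed entirely: apply the fundamental theorem of calculus on $[v_0+\varepsilon,t]$ and let $\varepsilon\to0^+$, using the continuity of $g$ and $g(v_0)=0$.
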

\begin{proof}
We prove the statement by induction on the number $m$ of coordinates of $x$ not equal to zero. By property (\rmnum{2}), we can assume that the first $m$ coordinates of $x$ are not equal to zero.

It is trivial that the statement is true for $m=1$. Assume that the statement holds true for $m-1$. We want to show that
\begin{align}\label{ind}
f(x_1 e_1 + \dots +x_m e_m)=0
\end{align}
for all the $x_1,\dots,x_m$ not zero.
For $x_1 > x_2 > 0$ or $0 > x_2 >x_1$, taking $x=x_1 e_1 + x_3 e_3 + \dots +x_m e_m$, $\lambda = \frac{x_2}{x_1}$ in (\ref{30}), we get
\begin{align}\label{38}
&f (x_1e_1 + x_3 e_3 + \dots +x_m e_m)  \nonumber \\
 &= \frac{x_2}{x_1}f (x_2 e_1 + x_3 e_3 + \dots +x_m e_m)  + \left(1-\frac{x_2}{x_1}\right) f(x_1 e_1 + x_2 e_2 +x_3 e_3 + \dots +x_m e_m).
\end{align}
For $x_2 > x_1 > 0$ or $0 > x_1 >x_2$,
taking $x=x_2 e_2 + x_3 e_3 + \dots +x_m e_m$, $1-\lambda = \frac{x_1}{x_2}$ in (\ref{30}), we get
\begin{align}\label{40}
&f (x_2e_2 + x_3 e_3 + \dots +x_m e_m) , \nonumber \\
&= \left(1- \frac{x_1}{x_2} \right)f (x_1 e_1 +x_2 e_2 + x_3 e_3 + \dots +x_m e_m) + \frac{x_1}{x_2}f(x_1 e_2 +x_3 e_3 + \dots +x_m e_m).
\end{align}
For $x_1 >0 > x_2$ or $x_2 > 0 > x_1$, taking $0 < \lambda = \frac{x_2}{x_2 - x_1} <1$ and $x= x_1 e_1 + x_2 e_2 + x_3 e_3 + \dots +x_m e_m$ in (\ref{30}), we get
\begin{align}\label{23}
&f(x_1 e_1 + x_2 e_2 + x_3 e_3 + \dots +x_m e_m) \nonumber \\
= & \frac{x_2}{x_2 - x_1} f(x_2 e_2 + x_3 e_3 + \dots +x_m e_m) + \frac{-x_1}{x_2 - x_1}f(x_1 e_1 + x_3 e_3 + \dots +x_m e_m).
\end{align}
Now, combined with the induction assumption and the continuity of $f$, (\ref{38}), (\ref{40}) and (\ref{23}) show that (\ref{ind}) holds true.
\end{proof}

\begin{proof}[Proof of Theorem \ref{thm:con}]
By Theorem \ref{thm:lapcon}, $c\mathcal{L}$ is a continuous, $\GL$ covariant and logarithmic translation covariant valuation on $\cb$.

Now we turn to the reverse statement. Since $Z: \cb \to C(\R^n)$ is a continuous, $\GL$ covariant and logarithmic translation covariant valuation, Lemma \ref{lem:ext} allows us to extend this valuation to a simple, $\GL$ covariant and logarithmic translation covariant valuation on  $\lcb$. Hence Lemma \ref{lem:cube} gives that there exists a constant $c \in \R$ such that
\begin{align*}
&ZC^n(re_1)=c \mathcal{L}C^n(re_1)
\end{align*}
for every $r \in \R$.
Now define $Z': \mathcal{P}^n \to C(\R^n)$ by
$$Z'P(x)=ZP(x)-c\mathcal{L}P(x), ~~x\in \R^n.$$
It is easy to see that $Z'$ is also a simple, $\GL$ covariant and logarithmic translation covariant valuation on  $\lcb$. Also $Z'C^n(re_1)=0$ for every $r \in \R$. Applying Lemma \ref{lem:inex} (for $Z'$) and Lemma \ref{lem:sim} (for $f=Z'T^n$) we get
$$Z'T^n=0.$$
Now using the inclusion-exclusion principle and the $\GL$ covariance and the simplicity of $Z'$ again, we have
$$Z'P=0$$
for every $P \in \mathcal{P}^n$ since every $P \in \mathcal{P}^n$ can be dissected into finite pieces of $\GL$ (with positive determinant) transforms and translations of $T^n$. Hence
$$ZP=c\mathcal{L}P$$
for every $P \in \mathcal{P}^n$. Since both $Z$ and $\mathcal{L}$ are continuous on $\cb$,
$$ZK=c\mathcal{L}K$$
for every $K \in \cb$.
\end{proof}


\subsection{Laplace transforms on functions}\label{sec:fun}
We first consider indicator functions of Borel sets.
\begin{lem}\label{thm:char}
    If a map $z:\la\rightarrow\conr$ is a continuous,
    $\GL$ covariant and logarithmic translation covariant valuation,
    then there exists a continuous function $h$ on $\R$ satisfying \eqref{eqn:zero} and \eqref{eqn:growth} such that
    \[z(\a\kik{E})(x)=h(\a)\int_{\R^n} \kik{E}(y) \me^{-\inp{x}{y}}dy\]
    for every $\a\in\R$, $x\in\R^n$ and bounded Borel set $E\subset\R^n$.
\end{lem}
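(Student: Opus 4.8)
\medskip

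The plan is to reduce the claim about the function-valued valuation $z$ on indicator functions to the convex-body characterization in Theorem~\ref{thm:con}, and to pin down the function $h$ along the way. First I would define, for each fixed $\a \in \R$, the map $Z_\a : \cb \to \conr$ by $Z_\a K = z(\a \kik{K})$. By Lemma~\ref{lem:rel}, $Z_\a$ is a continuous, $\GL$ covariant and logarithmic translation covariant valuation on $\cb$, so Theorem~\ref{thm:con} applies and yields a constant $c(\a) \in \R$ with $Z_\a K = c(\a)\,\lt K$ for every $K \in \cb$. Setting $h(\a) := c(\a)$, this already gives $z(\a \kik{K})(x) = h(\a)\int_K \me^{-\inp{x}{y}}\,dy$ for every convex body $K$; since $\lt K(0) = V_n(K) \neq 0$, the constant $h(\a)$ is uniquely determined as $h(\a) = z(\a\kik{K})(0)/V_n(K)$, independent of $K$.

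\medskip

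Next I would upgrade from convex bodies to arbitrary bounded Borel sets $E$. The right tool is the inclusion–exclusion / valuation machinery together with continuity: on $\cb$ (or on finite unions of boxes) both sides of the claimed identity agree, and since $\a\kik{E}$ can be approximated in $L^1(\R^n)$ by finite unions of axis-parallel boxes (equivalently by functions $\a\kik{P}$ with $P$ a finite union of polytopes), the $L^1$-continuity of $z$ and the dominated convergence theorem applied to $\int (\cdot)\,\me^{-\inp{x}{y}}dy$ force the identity to persist for $E$. Concretely, for a finite union of polytopes $P = P_1 \cup \dots \cup P_m$ the valuation property of $z$ together with the inclusion–exclusion principle (valid since $z$ is a valuation on the lattice generated by such indicators, with join/meet the pointwise max/min) gives $z(\a\kik{P})(x) = \sum (-1)^{j-1}\sum z(\a\kik{P_{i_1}\cap\dots\cap P_{i_j}})(x)$, and each term equals $h(\a)\lt(P_{i_1}\cap\dots\cap P_{i_j})(x)$ by the convex case; summing, the right-hand side collapses (again by inclusion–exclusion, now for the integral) to $h(\a)\int_P \me^{-\inp{x}{y}}dy$. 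Then approximate a general bounded Borel $E$ by such $P$ in measure.

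\medskip

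It then remains to verify the three asserted properties of $h$: continuity on $\R$, $h(0) = 0$, and the linear growth bound \eqref{eqn:growth}. The vanishing $h(0)=0$ is immediate from \eqref{eq:z0}, since $z(0\cdot\kik{K}) = z(0) \equiv 0$. For continuity, I would use the $L^1$-continuity of $z$: if $\a_i \to \a$ then $\a_i \kik{C^n} \to \a \kik{C^n}$ in $L^1(\R^n)$, hence $h(\a_i)V_n(C^n) = z(\a_i\kik{C^n})(0) \to z(\a\kik{C^n})(0) = h(\a)V_n(C^n)$. For the growth estimate, the idea is to exploit $\GL$ covariance to rescale the cube: for $\lambda > 0$ let $\phi_\lambda \in \GL$ be the dilation by $\lambda^{1/n}$, so $\phi_\lambda C^n$ has volume $\lambda$ and $\kik{C^n}\circ\phi_\lambda^{-1} = \kik{\phi_\lambda C^n}$; $\GL$ covariance gives $z(\a\kik{\phi_\lambda C^n})(0) = \lambda\, z(\a\kik{C^n})(0) = \lambda h(\a) V_n(C^n)$, which is consistent but not yet a bound. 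The genuine input is that the cube of volume $\lambda$ can be tiled by $N \approx \lambda$ unit cubes when $\lambda$ is a positive integer, and — crucially — by continuity in $L^1$ one compares $\a\kik{[\,]}$ on a long thin box with $\a\kik{C^n}$; I expect the main obstacle to be extracting \eqref{eqn:growth} cleanly, and the cleanest route is: evaluate the already-established identity $z(\a\kik{E})(x) = h(\a)\lt\kik{E}(x)$ at a convenient $E$ and $x$, then use that $z$ maps into $\conr$ together with $L^1$-continuity to bound $|z(\a\kik{E})(x)|$ by a quantity linear in $\|\a\kik{E}\|_{L^1} = |\a|V_n(E)$; dividing by $\lt\kik{E}(x)$ (choosing $E$, $x$ so this is bounded below by a multiple of $V_n(E)$, e.g.\ $x=0$) yields $|h(\a)| \le \gamma|\a|$. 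This last quantitative step, turning qualitative $L^1$-continuity into a uniform Lipschitz-type bound, is the part that needs the most care.
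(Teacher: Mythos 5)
Most of your argument coincides with the paper's: defining $Z_\a K=z(\a\kik{K})$, invoking Lemma~\ref{lem:rel} and Theorem~\ref{thm:con} to get $z(\a\kik{K})=h(\a)\lt K$, obtaining $h(0)=0$ from \eqref{eq:z0}, continuity of $h$ from $L^1$-continuity of $z$, and passing to bounded Borel sets via inclusion--exclusion on finite unions of boxes plus $L^1$-approximation and dominated convergence. That part is fine.

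The genuine gap is your treatment of the growth bound \eqref{eqn:growth}, and you flag the right spot yourself. Your proposed route is to bound $\abs{z(\a\kik{E})(x)}$ by a constant times $\norm{\a\kik{E}}=\abs{\a}V_n(E)$ ``using that $z$ maps into $\conr$ together with $L^1$-continuity.'' No such bound follows: $z$ is only assumed sequentially continuous pointwise in $x$, it is not linear or homogeneous, so there is no uniform-boundedness or closed-graph type principle available; and since $z(\a\kik{E})(o)=h(\a)V_n(E)$ is already established, the estimate $\abs{z(\a\kik{E})(o)}\le\gamma\abs{\a}V_n(E)$ is literally equivalent to $\abs{h(\a)}\le\gamma\abs{\a}$, so the step is circular. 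The paper closes this by turning qualitative continuity into the bound via a contradiction with a sequence of thin boxes whose size is tied to $\a_j$: if \eqref{eqn:growth} fails, pick $\a_j\neq0$ with $\abs{h(\a_j)}>2^j\abs{\a_j}$ and set $E_j=\left[0,2^{-j}/\abs{\a_j}\right]\times[0,1]^{n-1}$. Then $\norm{\a_j\kik{E_j}}=2^{-j}\to0$, so continuity and \eqref{eq:z0} force $z(\a_j\kik{E_j})(o)\to0$, while the already-proved representation gives $\abs{z(\a_j\kik{E_j})(o)}=\abs{h(\a_j)}\,V_n(E_j)=\abs{h(\a_j)}/(2^j\abs{\a_j})>1$, a contradiction. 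Note the boxes $E_j$ are convex bodies, so only the representation on $\cb$ is needed here; the essential idea you are missing is to let the support shrink at a rate coupled to the hypothetical growth of $h$, rather than seeking a uniform linear estimate for $z$.
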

\begin{proof}
    For any $\alpha \in \R$,
    define $Z_{\alpha }:\cb\rightarrow\conr$ by
    \[Z_\alpha K=z(\alpha \kik{K})\]
    for every $K\in\cb$.
    Lemma \ref{lem:rel} shows that $Z_\alpha$ is a continuous,
    $\GL$ and logarithmic translation covariant valuation on $\cb$.

    Therefore, by Theorem \ref{thm:con}, there exists a function $h \in\R$ such that
    \begin{align*}
    z(\alpha \kik{K})(x)=Z_\alpha K(x)&=h(\alpha) \lt K (x)\\
    &=h(\alpha) \int_K\me^{-\inp{x}{y}}dy
    = h(\a)\int_{\R^n} \kik{K}(y) \me^{-\inp{x}{y}}dy
    \end{align*}
    for every $K\in\cb$ and $x\in\R^n$. The function $h$ is continuous since $z$ is continuous and $\norm{\alpha_i \kik{K} - \alpha \kik{K}} \to 0$ whenever $\alpha_i \to \alpha$. $z(0) = 0$ (see \eqref{eq:z0}) gives that $h(0)=0$.
    In particular, this representation holds on $\upar$, the set of finite union of cubes, by the inclusion-exclusion principle.

Now we consider Borel sets in $\R^n$.
For each bounded Borel set $E\subset\R^n$, there exists a sequence $\set{K_i}$ in $\upar$
such that $\alpha \kik{K_i}\rightarrow \alpha \kik{E}$ in $L^1_c(\R^n)$ for every $\alpha \in \R$ as $i\rightarrow\infty$.
Moreover, for every $x \in \R^n$,
\begin{align*}
0 \leq \lim_{i\rightarrow\infty}\left|\int_{\R^n}\left(\kik{K_i} (y)-\kik{E} (y)\right) \me^{-\inp{x}{y}}dy\right|
&\leq \lim_{i\rightarrow\infty}\int_{\R^n} |\kik{K_i} (y)-\kik{E} (y)| \me^{-\inp{x}{y}} dy \\
&\leq \max_{y \in E}\me^{-\inp{x}{y}} \lim_{i\rightarrow\infty}\int_{\R^n} |\kik{K_i} (y)-\kik{E} (y)|dy \\
&=0.
\end{align*}
Thus, the continuity of $z$ gives that
\begin{align*}
z(\a\kik{E})(x) = \lim_{i\rightarrow\infty}z(\a\kik{K_i})(x)
&= h(\alpha) \lim_{i\rightarrow\infty}\int_{\R^n} \kik{K_i} (y) \me^{-\inp{x}{y}}dy \\
&= h(\alpha) \int_{\R^n} \kik{E} (y) \me^{-\inp{x}{y}}dy.
\end{align*}

The last step is to show that $h$ satisfies \eqref{eqn:growth}.
If $h$ does not satisfy \eqref{eqn:growth},
then there exists a sequence $\set{\a_j}$ in $\R \setminus \{0 \}$ (since $h$ satisfies \eqref{eqn:zero}) such that
\begin{align}\label{eqn:h}
|h(\a_j)|>2^i\abs{\a_j}.
\end{align}


Set $E_j=\left[0,2^{-j} \left/ \abs{\a_{j}} \right. \right] \times [0,1]^{n-1}$.
We have
\[\int_{E_j}dy=\frac{2^{-j}}{\abs{\a_{j}}}.\]
Let $g_j=\a_{j} \kik{E_j}$ and $f\equiv0$. Clearly $g_j, f \in \la$. Since
\begin{align*}
\int_{\R^n} \abs{g_j(y)} dy = \abs{\a_{j}} \int_{E_j}dy = 2^{-j} \to 0
\end{align*}
when $j \to \infty$. Hence $\|g_j - f\| \to 0$. The continuity of $z$ now implies that
\begin{align*}
z(f)(o) = \lim\limits_{j \to \infty} z(g_j) (o) dy.
\end{align*}
On the other hand, $z(f)(o) = 0$ (see \eqref{eq:z0}) and the above statement gives that
$$z(g_j) (o) = h(\a_{j}) \int_{\R^n} \kik{E_j}(y) dy$$
However,
since $h$ satisfies \eqref{eqn:h}, we obtain
\begin{align*}
0 = |z(f)(o)| = \lim_{j\to\infty} |z(g_j) (o)|
    &= \lim_{j\to\infty} \abs{h(\a_{j})} \int_{E_j}dy \\
    &= \lim_{j\to\infty} \frac{\abs{h(\a_{j})}}{2^{j}\xabs{\a_{j}}} \\
	&= \limsup_{j\to\infty} \frac{\abs{h(\a_{j})}}{2^{j}\xabs{\a_{j}}} \\
	&\geq 1.
\end{align*}
It is a contradiction.
Hence $h$ satisfies \eqref{eqn:growth}.
\end{proof}

Next, we deal with simple functions
\begin{lem}\label{thm:sf}
	Let $z:\la\to\conr$ be a valuation.
	Suppose that there exists a continuous function $h$ on $\R$ satisfying \eqref{eqn:zero} and \eqref{eqn:growth} such that
	\[z(\a\kik{E})(x)=h(\alpha) \int_{\R^n} \kik{E} (y) \me^{-\inp{x}{y}}dy\]
	for every $\a\in\R$, $x\in\R^n$ and bounded Borel set $E\subset\R^n$.
	Then
	\[z(g)= \int_{\R^n}(h\circ g)(y)\me^{-\inp{x}{y}}dy\]
	for every simple function $g\in\la$.
\end{lem}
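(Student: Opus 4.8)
The plan is to assemble an arbitrary simple function from indicators --- for which the formula is already assumed --- using only the valuation identity \eqref{eqn:dval}. First I would fix notation: write a simple function $g\in\la$ in the form $g=\sum_{i=1}^{p}\alpha_i\kik{E_i}+\sum_{j=1}^{q}\beta_j\kik{F_j}$, where $\alpha_i>0$, $\beta_j<0$, and the sets $E_1,\dots,E_p,F_1,\dots,F_q$ are pairwise disjoint bounded Borel sets (the level sets of $g$ at its nonzero values). Taking $\a=0$ in the hypothesis gives $z(0)\equiv0$; together with $h(0)=0$ from \eqref{eqn:zero} this shows $(h\circ g)$ vanishes off the support of $g$, so the target of the lemma is
\[
\int_{\R^n}(h\circ g)(y)\me^{-\inp{x}{y}}dy=\sum_{i=1}^{p}h(\alpha_i)\int_{E_i}\me^{-\inp{x}{y}}dy+\sum_{j=1}^{q}h(\beta_j)\int_{F_j}\me^{-\inp{x}{y}}dy,
\]
and it suffices to show $z(g)(x)$ equals this right-hand side.

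\emph{Step 1 (nonnegative simple functions).} If $q=0$, set $u_i=\alpha_i\kik{E_i}$. Since the $E_i$ are pairwise disjoint and the heights positive, at each point at most one $u_i$ is nonzero, so $u_1\vee\dots\vee u_k=\sum_{i\le k}u_i$ while $(u_1\vee\dots\vee u_{k-1})\wedge u_k=0$ for every $k$. Inserting these into \eqref{eqn:dval} and using $z(0)\equiv0$ telescopes to $z(g)=\sum_{i=1}^{p}z(u_i)$, and each $z(u_i)=z(\alpha_i\kik{E_i})$ is given by the hypothesis. \emph{Step 2 (nonpositive simple functions)} is the mirror image: if $p=0$, the same computation with $\vee$ and $\wedge$ interchanged (now $(u_1\wedge\dots\wedge u_{k-1})\vee u_k=0$) yields $z(g)=\sum_{j=1}^{q}z(\beta_j\kik{F_j})$.

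\emph{Step 3 (general case).} If $p,q\ge1$, I would apply \eqref{eqn:dval} to $u=\kik{F_1}$ and $v=g$. On $F_1$ one has $v=\beta_1<0<1=u$; on each $E_i$ one has $u=0<\alpha_i=v$; on each $F_j$ with $j\ge2$ one has $v=\beta_j<0=u$; and $u=v=0$ elsewhere. Hence $u\vee v=\kik{F_1}+\sum_{i=1}^{p}\alpha_i\kik{E_i}$ is a nonnegative simple function, $u\wedge v=\sum_{j=1}^{q}\beta_j\kik{F_j}$ is a nonpositive simple function, and $u$ is a single indicator, so $z(u\vee v)$, $z(u\wedge v)$ and $z(u)$ are all known from Steps 1--2 and the hypothesis. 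Then $z(g)=z(u\vee v)+z(u\wedge v)-z(u)$ produces exactly the asserted expression, the contribution $h(1)\int_{F_1}\me^{-\inp{x}{y}}dy$ cancelling between $z(u\vee v)$ and $z(u)$.

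There is no genuine obstacle here; the only points needing care are the elementary checks that the pointwise maxima and minima above reduce to the claimed sums (which rests solely on disjoint supports together with a fixed sign pattern) and that every function appearing --- $u$, $v$, their join and meet, and all the partial joins and meets in Steps 1--2 --- is again a bounded-support simple function, hence lies in $\la$, so that \eqref{eqn:dval} is legitimately applicable. Note that the growth bound \eqref{eqn:growth} is not used in this lemma; only \eqref{eqn:zero} enters, through $z(0)\equiv0$ and $h(0)=0$.
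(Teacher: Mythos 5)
Your proposal is correct, and it is actually more careful than the paper's own argument. The paper proves the lemma in one display: writing $g=\sum_{i=1}^m\a_i\kik{E_i}$ with the $E_i$ disjoint, it asserts $g=(\a_1\kik{E_1})\vee\dots\vee(\a_m\kik{E_m})$ and deduces $z(g)=\sum_{i=1}^m z(\a_i\kik{E_i})$ from ``the inclusion--exclusion principle'' on the lattice $(\la,\vee,\wedge)$. Both of these steps are literally valid only when every $\a_i\ge0$: if some $\a_i<0$, the pointwise maximum truncates the negative levels at $0$, and the higher-order meets occurring in inclusion--exclusion (for instance $(\a_1\kik{E_1})\wedge(\a_2\kik{E_2})\wedge(\a_3\kik{E_3})$ with signs $+,-,-$, which equals $\a_2\kik{E_2}+\a_3\kik{E_3}$) are no longer single scaled indicators, so their $z$-values are not supplied by the hypothesis. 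Your sign-splitting closes exactly this gap: Steps~1 and~2 are clean telescoping versions of the paper's computation for the one-signed cases, and Step~3's choice of $u=\kik{F_1}$ and $v=g$ --- whose join is a nonnegative simple function and whose meet a nonpositive one --- reduces the mixed case to them by a single further application of \eqref{eqn:dval}, with the $h(1)\int_{F_1}\me^{-\inp{x}{y}}dy$ term cancelling as you say. All intermediate functions are simple with bounded support, hence lie in $\la$, so each use of the valuation identity is legitimate; your observations that $z(0)\equiv0$ follows from taking $\a=0$ in the hypothesis together with \eqref{eqn:zero}, and that \eqref{eqn:growth} plays no role in this lemma, are both accurate.
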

\begin{proof}
Let $g\in\la$ be a simple function. We can write $g=\sum_{i=1}^m\a_i\kik{E_i}$, where $E_1,\ldots,E_m$ are disjoint bounded Borel sets and $\a_1,\ldots,\a_m\in\R$.
Hence
\begin{align}\label{eqn:sf}
z(g) = z\Big(\sum_{i=1}^m\a_i\kik{E_i}\Big) = z ((\a_1\kik{E_1}) \vee \dots \vee (\a_m\kik{E_m})) =\sum_{i=1}^mz(\a_i\kik{E_i}),
\end{align}
where the last equation is the inclusion-exclusion principle for valuations on the lattice $(\la,\vee,\wedge)$.
	
Since $h\circ g=\sum_{i=1}^mh(\a_i)\kik{E_i}$,
by \eqref{eqn:sf}, we obtain
\begin{align*}
z(g) = \sum_{i=1}^mz(\a_i\kik{E_i}) &= \sum_{i=1}^m h(\alpha_i) \int_{\R^n} \kik{E_i} (y) \me^{-\inp{x}{y}}dy\\
&= \int_{\R^n} \sum_{i=1}^m h(\alpha_i) \kik{E_i} (y) \me^{-\inp{x}{y}}dy = \int_{\R^n}  (h\circ g)(y) \me^{-\inp{x}{y}}dy.
\end{align*}
\end{proof}

Finally, we are ready to prove Theorem \ref{thm:laplace}.

\begin{proof}[Proof of Theorem \ref{thm:laplace}]
Theorem \ref{thm:prop} shows that $f \mapsto \mathcal{L}(h \circ f)$ is a continuous, $\GL$ covariant and logarithmic translation covariant valuation.
It remains to show the reverse statement.

For a nonnegative function $f\in\la$, there exists an increasing sequence of nonnegative simple functions $\set{g_k} \subset \la$ such that $g_k \uparrow f$ pointwise.
The monotone convergence theorem gives that $\norm{g_k-f} \to 0$. Note that every function $f \in \la$ can be written as $f = f_+ - f_-$, where
\begin{align*}
f_+ = \begin{cases}
f(x), &x \in \set{f \geq 0} \\
0, &x \in \set{f < 0}
\end{cases},~~~
f_- = \begin{cases}
0, &x \in \set{f \geq 0} \\
-f(x), &x \in \set{f < 0}
\end{cases}.
\end{align*}
Hence the above statement gives that there exists a sequence of simple functions $\set{g_k} \subset \la$ such that $g_k \to f$ pointwise and $\norm{g_k-f} \to 0$ by the triangle inequality. Moreover, the increasing sequence $|g_k(x)| \uparrow |f(x)|$ for every $x \in \R^n$.
    Due to the continuity of $z$, Lemma \ref{thm:char} and Lemma \ref{thm:sf}, we have
    \begin{equation}\label{eqn:yiban}
    	z(f)(x) = \lim_{k\to\infty}z(g_k)(x) = \lim_{i\to\infty}\int_{\R^n}(h\circ g_k)(y)\me^{-\inp{x}{y}}dy,
    \end{equation}
    where $h$ is a continuous function satisfying \eqref{eqn:zero} and \eqref{eqn:growth}.
    Therefore,
    \begin{align*}
    |h \circ g_k| \leq \gamma |g_k| \leq \gamma |f|.
    \end{align*}
    The dominated convergence theorem, the continuity of $h$, and \eqref{eqn:yiban} now yield
    \begin{align*}
    z(f) &= \lim_{i\to\infty}\int_{\R^n}(h\circ g_k)(y)\me^{-\inp{x}{y}}dy \\
    &= \int_{\R^n}(h\circ f)(y)\me^{-\inp{x}{y}}dy \\
    &= \lt(h\circ f).
    \end{align*}

\end{proof}


\section*{Acknowledgement}
\addcontentsline{toc}{section}{Acknowledgement}

The work of the first author was supported in part by the National Natural Science Foundation of China (11671249) and the Shanghai Leading Academic Discipline Project (S30104).
The work of the second author was supported in part by
the European Research Council (ERC) Project 306445 and
the Austrian Science Fund (FWF) Project P25515-N25.


\addcontentsline{toc}{section}{References}

\end{document}